\def\BState{\State\hskip-\ALG@thistlm}
\newcommand{\qed}{\nobreak \ifvmode \relax \else
	\ifdim\lastskip<1.5em \hskip-\lastskip
	\hskip1.5em plus0em minus0.5em \fi \nobreak
	\vrule height0.75em width0.5em depth0.25em\fi}
\DeclareMathAlphabet{\pazocal}{OMS}{zplm}{m}{n}
\newcommand{\Ia}{\mathcal{I}}
\DeclarePairedDelimiter{\nm}{\lVert}{\rVert}
\DeclarePairedDelimiter{\abs}{\lvert}{\rvert}
\newcommand{\bv}{{\bf v}}
\newcommand{\bx}{{\bf x}}
\newcommand{\bn}{{\bf n}}
\newcommand{\bK}{{\bf K}}
\newcommand{\intl}{\int\limits}
\newcommand{\dl}{\ensuremath{\partial}}
\newcommand{\p}[2]{\ensuremath{\frac{\partial #1}{\partial #2}}}
\newcommand{\lb}{\ensuremath{\lambda}}
\newcommand{\etal}{{\it et~al}.\ }
\newcommand{\ie}{\textit{i}.\textit{e}.\ }
\newcommand{\Eq}[1]{Eq.~\eqref{#1}}
\newcommand{\Fig}[1]{Figure~\ref{#1}}
\newcommand{\Sect}[1]{\S~\ref{#1}}
\DeclareBoldMathCommand\balpha{{\alpha}}
\DeclareBoldMathCommand\btau{{\tau}}
\DeclareBoldMathCommand\bgamma{{\gamma}}
\DeclareBoldMathCommand\bbeta{{\beta}}
\DeclareBoldMathCommand\bgrad{{\nabla}}
\DeclareBoldMathCommand\bX{{\mathcal{X}}}
\newcommand{\pdt}[1]{\frac{\partial #1}{\partial t}}
\newcommand{\pdx}[1]{\frac{\partial #1}{\partial x}}
\renewcommand{\div}{\bgrad \cdot}
\newcommand{\R}{\mathbb{R}}
\newtheorem{theorem}{Theorem}[section]
\newtheorem{lemma}[theorem]{Lemma}
\newenvironment{proof}[1][Proof.]{\begin{trivlist}
		\item[\hskip \labelsep {\bfseries #1}]}{\end{trivlist}}
\newcommand{\ul}{\underline}      
\let\ps@plain\ps@fancy 
\title{On the convergence analysis of a hybrid numerical method for multicomponent transport in porous media}
\date{\today}
\author[1]{Prabir Daripa\thanks{email:daripa@math.tamu.edu, ORCID ID: orcid.org/0000-0002-8771-0149}}  
\author[2]{Sourav Dutta\thanks{Author for correspondence  (email:sdutta.math@gmail.com, ORCID ID: orcid.org/0000-0002-7051-175X)}} 
\affil[1]{\scriptsize{Department of Mathematics, Texas A\&M University, 3368 TAMU, College Station, TX 77843, USA} }
\affil[2]{{\scriptsize U.S. Army Engineer Research and Development Center, Coastal and Hydraulics Laboratory (ERDC-CHL), Vicksburg, MS 39180, USA}}
\providecommand{\keywords}[1]{\textbf{\textit{Keywords---}} #1}
\providecommand{\msc}[1]{\textbf{\textit{AMS Subject Classifications---}} #1}
\begin{document}

\thispagestyle{fancy}
\maketitle

\begin{abstract}
	In this article, the convergence of a hybrid numerical method introduced in Daripa \& Dutta~({\sl J. Comput. Phys.}, 335:249-282, 2017) has been established. This method integrates a discontinuous finite element method with a modified method of characteristics (MMOC) in combination with finite difference (FD) procedures, and has been successfully applied to solve a coupled system of nonlinear equations that arises in multicomponent two-phase porous media flows. The present convergence analysis is focused on the MMOC-FD procedure for a nonlinear system of transport equations. For this purpose, an analogous single-component system of transport equations has been considered and some key ideas for possible extension to multicomponent systems have been briefly discussed. Error estimates have been obtained and these estimates have also been shown to be consistent with realistic numerical simulations of flows arising in enhanced oil recovery processes.
\end{abstract}

\keywords{multicomponent two-phase flow, finite difference method, modified method of characteristics, convergence analysis, error estimate, numerical simulations}

\smallskip

\msc{65M12, 65M25, 65M06, 76S05}

\section{Introduction}
In Daripa and Dutta \cite{DD2017} we developed a hybrid numerical method for solving a coupled system of elliptic and transport equations that arises in modeling multicomponent, multiphase porous media flow in the context of chemical Enhanced Oil Recovery (EOR) by Surfactant-Polymer-flooding (SP-flooding). The mathematical model involves two immiscible fluids (water and oil) with two components (polymer and surfactant) present in one of the fluids (aqueous phase). The hybrid method is derived from a non-traditional discontinuous finite element method and a time-implicit finite difference method based on the \ul{M}odified \ul{M}ethod \ul{O}f \ul{C}haracteristics (MMOC). Numerical results obtained with this method for a variety of initial data in rectilinear and radial geometries are in excellent agreement qualitatively with physics based expectation and converge under mesh refinement. In some cases where exact solutions are available, numerical results are in excellent agreement with the exact ones as well. 

In this paper, we present a convergence analysis of the numerical method. For the analysis, we consider a reduced system of equations in one spatial dimension involving only one component (polymer). This reduced system models chemical \ul{E}nhanced \ul{O}il \ul{R}ecovery (EOR) by polymer flooding~\cite{DGLM1988-1,DGLM1988-2} in one spatial dimension. Previous studies \cite{DR1982,D1983} on the convergence analysis of the MMOC-based methods have been focused on two-phase flow without components and the present study builds on that work by adding a component. This complicates the analysis by introducing additional terms due to the coupling of the coefficients involved in the transport equations. The potential and some of the challenges in extending the convergence analysis presented here to the original problem involving two components and in two dimensions have been briefly discussed, however a complete analysis of the original problem remains open. For the purpose of validating the error estimates obtained from the analysis, we carry out numerical simulations of polymer flooding and compute $L^2$ and $L^\infty$ error norms for the numerical solutions. From a practical standpoint, the primary focus of secondary and tertiary (EOR) recovery processes is in the interior of the domain (reservoir) and not at its boundaries. 
This is accounted for in the form of some simplifying regularity assumptions in the analysis, that facilitate the study of the convergence behavior of the numerical method and that are similar to the assumptions made in the analytical study of many other numerical methods~\cite{D1983,EW1980,R1985} for multiphase transport problems. 

To this end, it is worth citing some methods, without being exhaustive by any means, on numerically solving similar systems of partial differential equations arising in porous media flows. These methods can be broadly categorized into two classes: purely Eulerian and Eulerian-Lagrangian. Some of the methods which fall under Eulerian class are locally mass conservative finite volume methods~\cite{D1993,CPV2014} and finite element based methods such as control volume, continuous Galerkin~\cite{EHM2003}, discontinuous Galerkin~\cite{CCSS2001,KP2016} and mixed finite element~\cite{NSM2004} which have high order accuracy and have been applied for numerical simulations of porous media flows. Some of the methods which can be grouped under Eulerian-Lagrangian class are front-tracking methods~\cite{DGLM1988-1,DGLM1988-2} and MMOC based methods~\cite{DR1982,D1983}. There are many variants of these methods such as modified method of characteristics with adjusted advection~\cite{DHP1999}, the Eulerian-Lagrangian localized adjoint method~\cite{CRHE1990}, and the characteristic mixed finite element method~\cite{AW1995}, to name a few. Error estimates and convergence analyses of most of these methods have been carried out (see \cite{DR1982,CCSS2001,AW1995,W2000,WW2007,BK2012}) for two-phase flow systems without components. The present analysis is on a system with components.

The analysis involves estimation of errors introduced by the finite difference discretization of the derivatives, by the linear interpolation applied to compute solutions at non-nodal points where the characteristic curves intersect the computational grid and by the linearization of the coefficients. The coefficient functions and the auxiliary variables in the two transport equations depend on both the wetting phase saturation and the component concentration. This coupling creates an additional challenge for the analysis of the multicomponent system. The finite difference discretization errors are estimated using multi-variable Taylor Series. The errors due to the linear interpolation are estimated using the Peano kernel theorem \cite{DR1982} and the errors due to quasi-linear approximation of the nonlinear coefficients are estimated using various inequalities including the Cauchy-Schwarz and generalized arithmetic mean-geometric mean (AM-GM) inequalities. The aforementioned estimates are obtained in terms of the inner products of the relevant error variables. These are used to reformulate the transport equations in a way that allows us to estimate the discrete $L^2$ errors in the aqueous phase saturation and the component concentration. Taking into account the temporal discretization, a discrete Gr\"{o}nwall type inequality is finally used to obtain the desired estimates. 

The rest of the paper is laid out as follows. In \cref{sec:problem}, we discuss the governing equations for incompressible, multicomponent, immiscible two-phase flow of fluids through porous media. In \cref{sec:method}, we present the numerical method: the computational grids, the non-traditional discontinuous finite element method, the MMOC based finite difference scheme for the transport equations and the computational algorithm. In \cref{sec:analysis}, we present the convergence analysis of the method and the relevant error estimates. We present the numerical results and compare them with the theoretical error estimates in \cref{sec:results}. Finally \cref{sec:conclusions} contains concluding remarks.

\section{Background}
\subsection{Model}\label{sec:problem}
In \cite{DD2017}, a system of equations governing two-phase, two-component (SP-flooding) flow through porous media has been presented. With the possibility of some potential overlap, here we present the model for a single component flow (polymer flooding) system which will be later used for the analysis of the numerical method. 

Let $\Omega \subset \R^2$ represent a porous medium with boundary $\dl \Omega$. The incompressible and immiscible flow of the wetting phase (water or an aqueous solution of polymer and/or surfactant) and the non-wetting phase (oil) is described by a combination of the multiphase extension of Darcy's law (see \cite{MM1936}) for each phase and transport equations for each component. Let $s_j$ denote the saturation (volume fraction), $\bv_j$ denote the velocity, $p_j$ denote the phase pressure and $q_j$ denote the volumetric injection/production rate of phase $j$ where $j = o$ and $j= a$ denote the non-wetting and the wetting phases respectively. We recall from Daripa \& Dutta~\cite{DD2017} the phase transport equations 
\begin{align}
\phi \p {s_j}{t} + \bgrad \cdot \bv_j &= q_j,  \quad (\bx,t) \in \Omega \times (0,T], \quad j = a,\,o,  \label{paper2-sat}
\end{align}
and the equation for conservation of mass of any component dissolved in the aqueous phase 
\begin{align}\label{paper2-conc}
\phi \p {(cs_a)}{t} + \div (c\bv_a) = c^i q_a^+ - c q_a^-,  \quad (\bx,t) \in \Omega \times (0,T] , 
\end{align}
where $c$ is the concentration (volume fraction in the aqueous phase) of the dissolved component, $c^i$ is the concentration of the component in the injected fluid, $q_a^+ = \max(q_a,0) $ and $q_a^- = \max(-q_a,0)$. The inherent assumption in this model is that the component is passively advected with negligible diffusion and adsorption. Using conservation of momentum of each phase, the phase velocity $v_j$ is given by the Darcy-Muskat law
\begin{align}
\bv_j &= - \bK(\bx) \lb_j \bgrad p_j, \quad \bx \in \Omega, \quad j=a,\, o. \label{paper2-darcy}
\end{align}
Here $\phi$ is the porosity (taken to be constant in the numerical experiments in this study), $\bK(\bx)$ is the absolute permeability tensor of the porous medium, $\lb = \lb_a(s,c) + \lb_o(s,c)$ is the total mobility where $\lb_j = k_{rj}/\mu_j$ is the phase mobility, $k_{rj}$ is the relative permeability and $\mu_j$ is the viscosity of phase $j$. In addition to the above, the capillary pressure ($p_c$) is defined by
\begin{align}\label{paper2-cap}
p_c = p_o - p_a.
\end{align}
Since the porous medium is initially saturated with the two phases, we have
\begin{align}\label{paper2-saturated}
\sum_{j=o,a} s_j = 1.
\end{align}
The combination of the above equations produces a system of strongly coupled nonlinear equations which can be potentially degenerate. In order to avoid this difficulty, we reformulate the problem by using a fictitious pressure ($p$), to be called the global pressure below, for incompressible, immiscible two-phase flows with a single component (see \cite{DD2017}) defined by
\begin{align}\label{paper2-gblpnew}
p = \frac{1}{2} (p_o+p_a) &+ \frac{1}{2} \int^{s}_{s_c}\left(\hat{\lb}_o(\zeta,c) - \hat{\lb}_a(\zeta,c)\right)\frac{dp_c}{d\zeta}(\zeta)d\zeta \nonumber\\
 \qquad &- \frac{1}{2}\int \left(\int^{s}_{s_c}\frac{\dl}{\dl c}\left(\hat{\lb}_o(\zeta,c) - \hat{\lb}_a(\zeta,c)\right)\frac{dp_c}{d\zeta}(\zeta)d\zeta\right) \left(\p{c}{x}dx +\p{c}{y}dy\right),
\end{align}
where $\hat{\lb}_j = \lb_j/\lb $ for $ j = a,o$ and $s_c$ is the value of the aqueous phase saturation for which $p_c(s_c) = 0$. The global pressure is well defined for all values of $s_a$ in $[s_{ra},1-s_{ro}]$ where $s_{ra}$ (resp. $s_{ro}$) is the residual saturation of the wetting phase (resp. non-wetting phase). The global pressure was introduced by Chavent and Jaffr\'{e} \cite{CJ1986} for immiscible, incompressible two-phase flow without any components and was later revisited by several others (see for instance \cite{AJK2014}). If we write $s_a = s$, an equivalent formulation of the problem is obtained in terms of the primary variables $(p,s,c)$ as
\begin{subequations}
 \begin{align}
&- \div ( \bK(\bx)\lb\bgrad p )  = q,  &\bx \in \Omega, \ t \in (0,T] , \label{paper2-glbfinal}\\
	&\phi \p{s}{t} + \p{f}{s} \bv \cdot \bgrad s - \div (\bm{D} \bgrad s)= G_s - \p{f}{c} \bv \cdot \bgrad c, &\bx \in \Omega, \ t \in (0,T], \label{paper2-sateq}\\
&\phi \p{c}{t} + \left(\frac{f}{s} \bv - \frac{\bm{D}}{s} \bgrad s \right)\cdot \bgrad c  = G_c, &\bx \in \Omega, \ t \in (0,T],\label{paper2-conceq}
\end{align} 
\end{subequations}
\noindent where $\bv:= \bv_a + \bv_o = -\bK(\bx)\lb\bgrad p$ is the total velocity and $\bm{D}(s,c) = - \bK(\bx)\lambda_o(s)f(s,c)\frac{d p_c(s)}{d s}$ is the capillary pressure induced diffusion coefficient. Also, the total external flow rate $q = (q_a + q_o)$ is an appropriate source term for the pressure equation which denotes net volume of external fluid containing the non-wetting phase ($q_o$) and the wetting phase ($q_a$), injected per unit volume per unit time and the source terms for the transport equations are modeled by
\begin{align}\label{paper2-eq:compat1}
(G_s, G_c) = \begin{cases} \left((1-f)q, \dfrac{(c^i - c)q}{s} \right), \, &q \geq 0 \\ (0,0) \, , \, &q<0. \end{cases}
%q_a = Q \quad \text{ and }& \quad q_o = 0 \quad \text{ at } \bx = \bx^i, \\
%q_a = -(\lb_a/ \lb) Q \quad \text{ and }& \quad q_o = -(\lb_o/ \lb)Q \quad \text{ at } \bx = \bx^p, \label{paper2-eq:compat2}
\end{align}	

From the modeling perspective, the above assumptions signify that oil is never injected and the fluid mixture obtained at the production well is proportional to the resident fluid at the point. In \cref{paper2-sat,paper2-conc,paper2-glbfinal,paper2-sateq,paper2-conceq}, it is inherently assumed that relative to the computational domain, the source terms are ideally a collection of unit impulses. For instance, in \cite{EW1980} the authors model source terms as point sources. However, motivated by physics, they assume $q_j (j=a,o)$ to be smooth which corresponds to smoothly distributed sources and sinks, and also that $|q_j(\mathbf{x},t)| \leq K$ for some constant $K$. In \cite{D1983,R1985}, the source terms are also modeled in a similar fashion whereas, for the analysis they require the source terms to be bounded in space and smooth in time. These assumptions improve the regularity of the exact solution and are justified from the standpoint of modeling the kind of tertiary recovery problems that are of interest here. Such modeling practices are summarized in \cite{E1991}, \cite{CHM2006} (Chap 2 and 13) and the references cited therein. Following the same modeling philosophy, for the analysis and the numerical solution of the flow and transport system (\cref{paper2-glbfinal,paper2-sateq,paper2-conceq}) we assume that the external flow rate functions $q_a,q_o$ are smoothly distributed \cite{DFP1997}.

The following initial and boundary conditions are prescribed to complete the problem description.
\begin{subequations}
\begin{align}
 &\quad \forall \bx \in \Omega : \qquad s(\bx,0) = s_0(\bx) \quad \& \quad c(\bx,0) = c_0(\bx),    \label{paper2-ic}\\
 &\forall (\bx,t) \in \dl \Omega \times (0,T] : \quad \bgrad s \cdot\hat{\bn} = 0, \quad \bgrad c \cdot\hat{\bn} = 0 \quad \& \quad  \bv_j\cdot\hat{\bn} = 0 \,  (j = a,o), \label{paper2-bc}
\end{align}
\end{subequations}
where $\hat{\bn}$ denotes the outward unit normal to $\dl \Omega$. The no-flow boundary conditions in \cref{paper2-bc} are equivalent to requiring an impermeable boundary of the reservoir and impose a compatibility condition to the source terms in \cref{paper2-eq:compat1} as given by $\int_{\Omega} q dx = 0$. The phase velocity boundary conditions as stated in \cref{paper2-bc} lead to a no-flow condition for the total velocity given by, $\bv\cdot\hat{\bn} = (\bv_a + \bv_o)\cdot\hat{\bn} = 0$. This, in turn, translates to an effective boundary condition for the global pressure \cref{paper2-glbfinal} as $\bK(\bx)\lb\bgrad p \cdot \hat{\bn} = 0$. Clearly, \cref{paper2-glbfinal} and the aforementioned homogeneous Neumann boundary condition will determine $p$ only to within an additive constant. Thus, a normalizing constraint $\int_\Omega p \,dx = 0$ is imposed in order to ensure uniqueness \cite{E1991}. Moreover, as the total velocity $\bv$, the phase velocities $\bv_a$, $\bv_o$ and other quantities of interest depend on the gradient of pressure, an arbitrary normalization constraint on the pressure does not affect the computation of the transport variables.

Several models of relative permeability, $k_{rj}$ and capillary pressure are available in the literature (see \cite{C1986,vG1980}). 
For the numerical simulations presented in this study, we use the following modification to the van Genuchten model made by Parker \etal (\cite{PLK1987}),
\begin{subequations}
\begin{align}
 k_{ra}(s)&= s_e^{1/2}\left(1-(1-s_e^{1/m})^m\right)^2,\label{paper2-eq:relperm1}\\
 k_{ro}(s)&= (1-s_e)^{1/2}\left(1-s_e^{1/m}\right)^{2m},\label{paper2-eq:relperm2}\\
 p_c(s)&= \frac{1}{\alpha_0}\left(s_e^{-1/m}-1 \right)^{1-m} \label{paper2-eq:cap},
\end{align}
\end{subequations}
where $s_e = (s-s_{ra})/(1-s_{ra})$ is the effective saturation. The values of the parameters $m$ and $\alpha_0$ in the above model are known to depend on the interfacial tension, denoted by $\sigma0$, between the non-wetting and the wetting phases. In our study below we take $m=2/3$ and $\alpha_0 = 0.125$ (see \cite{GLGV2010}). It is also assumed that $s_e>0$ as $s_{ra} < s_0^{\sigma0} < s$ which means that the wetting phase saturation $s$ is bounded below by the initial resident saturation of the aqueous phase $s_0^{\sigma0}$ which is greater than the residual saturation of the wetting phase. Alternatively Corey-type imbibition relations can also be used (see \cite{C1986}). These models assume that $p_c$ and $k_{rj} (j=a,o)$ are nonlinear functions of only the wetting phase saturation, $s$, which is a valid assumption for polymer flooding. The aqueous phase viscosity is modeled by a linear function of polymer concentration,
\begin{align}\label{paper2-eq:vis}
    \mu_a = \mu_w(1+ \gamma c),
\end{align}
where $\mu_w$ is the viscosity of pure water and the coefficient $\gamma$ characterizes the particular polymer.

%%%------------------------------------------------
%%%---------------SUBSECTION------------------------
\subsection{Numerical scheme}\label{sec:method}
The system of coupled transport equations given by \cref{paper2-sateq,paper2-conceq} is solved using a combination of the MMOC and an implicit time finite difference scheme. For the computational grid, we partition the domain $\Omega$ into rectangular cells. Given positive integers $I,J \in \mathcal{Z}^+$, set $\Delta x = (x_{max}-x_{min})/I = 1/I $ and $\Delta y = (y_{max} - y_{min})/J =1/J$. We define a uniform Cartesian grid ${(x_{i},y_{j})} = {(i\Delta x,j\Delta y)} $ for $i = 0, . . ., I $ and $j = 0, . . ., J$. Each $(x_{i},y_{j})$ is called a grid point. For the case $i = 0, I$ or $j = 0, J$, a grid point is called a boundary point, otherwise it is called an interior point. In general, the grid size is defined as $h = max(\Delta x,\Delta y) > 0$. However, in this paper we use an uniform spatial grid: $\Delta x = \Delta y = h=1/N$.

The elliptic flow equation~\eqref{paper2-glbfinal} for global pressure is solved using a discontinuous finite element method on a non-body-fitted grid which is constructed in the following way. We introduce uniform triangulations inside the grid generated for the transport equations~\eqref{paper2-sateq} and \eqref{paper2-conceq}. This means every rectangular region $[x_{i},x_{i+1}]\times[y_{j},y_{j+1}]$ is cut into two pieces of right triangular regions: one is bounded by $x = x_{i}, y = y_{j} $ and $y=\frac{y_{j+1}-y_{j}}{x_{i}-x_{i+1}}(x-x_{i+1})+y_{j}$, the other is bounded by $x = x_{i+1}, y = y_{j+1}$ and $y=\frac{y_{j+1}-y_{j}}{x_{i}-x_{i+1}}(x-x_{i+1})+y_{j}$. Collecting all those triangular regions, also called elements, we obtain a uniform triangulation, $L^h =\{\kappa \vert \kappa \text{ is a triangular element}\} $. We may also choose the hypotenuse to be  $y=\frac{y_{j+1}-y_{j}}{x_{i+1}-x_{i}}(x-x_{i})+y_{j}$, and get another uniform triangulation from the same Cartesian grid. There is no conceptual difference on these two triangulations for our method.

%%%---------------SUBSUBSECTION------------------------
\subsubsection{Pressure equation}
The elliptic equation describing the evolution of global pressure is given by
\begin{subequations} 
\begin{align}
- \bgrad\cdot \left( \bK(\bx)\lambda\bgrad p \right) &=\tilde{q}, \quad   \ \bx\in\Omega\backslash\Sigma,\label{eq:paper2-ell-1}\\
\left( \bK(\bx)\lambda\bgrad p \right) \cdot \hat{\bn} &= 0, \quad \ \bx\in\partial\Omega,   \label{eq:paper2-ell-2}
\end{align}
\end{subequations}
where $\tilde{q} = q_a + q_o$ and $\Sigma$ denotes the union of the interfaces that separate $\Omega$ into several subdomains.  However, for simplicity of exposition we assume here that we have only two separated subdomains, $\Omega^{+}$ and $\Omega^{-}$ separated by an interface $\Sigma$ (see \Cref{fig:paper2-num-method}) which, as we will see later, is also the initial configuration of the quarter five-spot domain for all of our numerical simulations. 

\begin{figure}[h!]
	\centering
\begin{tikzpicture}[scale=1,
interface/.style={
	postaction={draw,decorate,decoration={border,angle=-45,
			amplitude=0.3cm,segment length=2mm}}},]
\draw[line width=0.2mm] (0,5) -- (5,5);
\draw[line width=0.2mm] (5,0) -- (5,5);
\draw[line width=0.2mm] (0,0)-- (0,5);
\draw[line width=0.2mm] (0,0)--(5,0);
\draw[line width=0.1mm] (0,1) -- (1,0);
\draw[line width=0.1mm] (0,2) -- (2,0);
\draw[line width=0.1mm] (0,3) -- (3,0);
\draw[line width=0.1mm] (0,4) -- (4,0);
\draw[line width=0.1mm] (0,5) -- (5,0);
\draw[line width=0.1mm] (1,5) -- (5,1);
\draw[line width=0.1mm] (2,5) -- (5,2);
\draw[line width=0.1mm] (3,5) -- (5,3);
\draw[line width=0.1mm] (4,5) -- (5,4);

\fill[blue!25!,opacity=.3] (0,0) rectangle (5,5);
\draw [line width=0.1mm, gray] (0,0) grid (5,5);

\draw[blue,thick] (0:2.2) arc (0:90:2.2) ;	
\normalsize
\draw	(1,-0.2) node [text=black,below] { $(0,0)-$  Source}
(4,5.2) node [text=black,above] { $(1,1)-$ Sink}
(3.5,1.5) node [text=black] { $\Omega^-$}
(.7,1.5) node [text=black] { $\Omega^+$}
(2.4,.6) node [text=black] { $\Sigma$}
(2.3,3.3) node [text=black,thick] { $\kappa$};	

\draw[gray,line width=.5pt,interface](0,0)--(5,0) (5,0)--(5,5) (5,5)--(0,5) (0,5)--(0,0);

\foreach \Point in {(0,0),(5,5)}{\fill[red] \Point circle[radius=4pt];}

\draw[line width=0.8mm, OliveGreen] (2,3) -- (2,4);
\draw[line width=0.8mm, OliveGreen] (2,4) -- (3,3);
\draw[line width=0.8mm, OliveGreen] (3,3) -- (2,3);
\end{tikzpicture}
\caption{Initial configuration of the solution domain with the (blue) arc representing the initial position of the discontinuity}
\label{fig:paper2-num-method}
\end{figure}
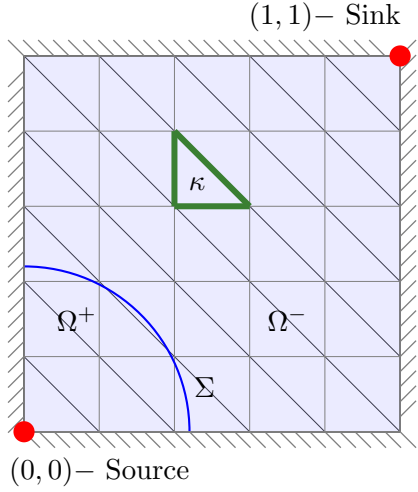

The following kinematic condition holds at the interface $\Sigma$.
\begin{equation}\label{eq:paper2-elljump}
\left[\bK(\bx)\lb\bgrad p\cdot\hat{\bn}\right]_\Sigma = 0,
\end{equation}
where $\hat{\bn}$ is the outward unit normal which points from $\Omega^-$ to $\Omega^+$ and $[ \,]$ denotes a jump. We assume the boundary $\partial \Omega$ and the interface $\Sigma$ to be Lipschitz continuous. Hence a unit normal vector, $\hat{\bn}$ can be defined a.e.~on $\Sigma$. This problem is solved using a non-traditional finite element formulation (see  \cite{HWW2010})  which is second order accurate in the $L^{\infty}$ norm for matrix coefficient elliptic equations with discontinuities across the interfaces. The weak formulation  of \cref{eq:paper2-ell-1,eq:paper2-ell-2} in the usual Sobolev spaces $H^1(\Omega)$ with $\psi \in H^1(\Omega)$ is given by
\begin{equation}\label{eq:paper2-weakform}
\int\limits_{\Omega^+} \bK\lambda\bgrad p\bgrad \psi + \int\limits_{\Omega^-} \bK\lambda\bgrad p\bgrad \psi -\int\limits_{\partial\Omega} \bK\lambda\psi\bgrad p \cdot \hat{\bn} = \int\limits_\Omega \tilde{q} \psi. 
\end{equation}

The elements, $\kappa$ of triangulation, $L^h$, are classified into regular cells and interface cells. We call $\kappa$ a regular cell if its vertices are in the same subdomain and an interface cell  when its vertices belong to different subdomains. For an interface cell, $\kappa = \kappa^+ \cup \kappa^-$ where $\kappa^+$ and $\kappa^-$ are separated by a line segment $\Sigma^h_k$, obtained by joining the two points where the interface $\Sigma$ intersects the sides or the vertices of that interface cell.  A set of grid functions, $H^{1,h} = \{\omega^h\,|\, \omega^h = \omega_{i,j} \,;\, 0 \leq i \leq I, 0\leq j\leq J\}$ are defined on the grid points of the mesh $L^h$. An extension operator $U^h: H^{1,h} \rightarrow H^1(\kappa)$ is constructed as follows. For any $\phi^h \in H^{1,h}$, $U^h(\phi^h)$ is a piecewise linear function and matches $\phi^h$ on the grid points. In a regular cell, it is a linear function that interpolates the values of $\phi^h$ at the grid points. In an interface cell, it consists of two pieces of linear functions, each defined on $\kappa^+$ and $\kappa^-$. The location of the discontinuity of the extended function $U^h(\phi^h)$ in an interface cell is on the line segment $ \Sigma^h_k$. Hence an interface jump condition on the pressure, $p$, if there is one, can be imposed on the two end points of this line segment at $\{ \partial \kappa \} \cap \{ \Sigma_k^h \}$ while the interface jump condition, \cref{eq:paper2-elljump}, is imposed at the middle point of $\Sigma_k^h$. For the construction of such extension operators for discontinuous coefficient elliptic equations, see~\cite{HWW2010,LLW2003}. The extension operators for the pressure equation, given by \cref{eq:paper2-ell-1,eq:paper2-ell-2}, have been explicitly constructed in \cite{DD2017}.\\
Using the extension functions as discussed above (also see \cite{DD2017}), the discrete version of the weak formulation \cref{eq:paper2-weakform} can be reformulated to finding a discrete function $\phi^h \in H^{1,h}$ such that
\begin{equation}\label{eq:paper2-final-weak}
\begin{split}
&\sum_{K \in L^h} \left(\,\int\limits_{K^+} \bK \lb \ \bgrad U^h(\phi^h) \bgrad U^h(\psi^h) + \intl_{K^-} {\bK} \lb \ \bgrad U^h(\phi^h) \bgrad U^h(\psi^h)\right) \\
& \quad - \sum_{K \in L^h} \int\limits_{\partial K} {\bK} \lb \ U^h(\psi^h)\bgrad U^h(\phi^h) \cdot \hat{\bn} = \sum_{K \in L^h}\left(\, \intl_{K^+} \tilde{q} \,U^h(\psi^h) + \intl_{K^-} \tilde{q} \,U^h(\psi^h)\right), \quad \forall \psi^h \in H^{1,h}.
\end{split}
\end{equation}
It can be shown that if $\bK(\bx)$ is positive definite, then the matrix obtained for the linear system of the discretized weak form, \cref{eq:paper2-final-weak}, is also positive definite (see Theorem 3.2 in \cite{HWW2010}) and is therefore invertible.

%%%---------------SUBSUBSECTION------------------------
\subsubsection{Transport equations}
The transport equations \eqref{paper2-sateq} and \eqref{paper2-conceq} are solved using a combination of a finite difference method with the \ul{M}odified \ul{M}ethod \ul{O}f \ul{C}haracteristics (MMOC). At first we rewrite \cref{paper2-sateq,paper2-conceq} as 
\begin{subequations}
\begin{align}
&\phi \p{s}{t} + \p{f}{s} \textbf{v}\cdot \bgrad s - \div (\bm{D} \bgrad s)= g_s - \p{f}{c} \textbf{v}\cdot \bgrad c, \label{paper2-satnum}\\
&\phi \p{c}{t} + \left(\frac{f}{s} \textbf{v} - \frac{\bm{D}}{s} \bgrad s \right)\cdot \bgrad c  + c g = g_c, \label{paper2-satpol}
\end{align}
\end{subequations}
where $\bm{D}(s,c) = -\bK(\bx)\lambda_o(s)f(s,c)\frac{d p_c(s)}{d s}$, 
\begin{align}
    (g_s, g, g_c) = \begin{cases} \left((1-f)q, \dfrac{q}{s}, c^i \dfrac{q}{s}\right),   &q \geq 0  \\(0, 0, 0)\, , \,  &q<0\end{cases}.
\end{align}

In \cref{paper2-satnum} we replace the advection term $\phi \p{s}{t}+ \p{f}{s} \bv \cdot \bgrad s$ by a derivative along its characteristic direction %denoted by $\tau_s = \tau_s(\bx , t)$. Accordingly, we define
in the following way 
\begin{align}\label{eq:paper2-char-deriv1}
\frac{\partial}{\partial \tau_s} = \frac{1}{\psi_s}\left(\phi \frac{\partial}{\partial t} + \p{f}{s} \textbf{v} \cdot \bgrad  \right),
\end{align}
where $\tau_s$ is used to parametrize the characteristics. Here $\psi_s$ is a suitable normalization that simplifies the numerical discretization of the characteristic derivative and is defined by
\begin{align}\label{eq:paper2-char-norm1}
\psi_s = \left[\phi^2 + \left(\p{f}{s}\right)^2 |\textbf{v}|^2\right]^{1/2}.
\end{align}
Then \cref{paper2-satnum} is equivalently written in the form
\begin{equation}\label{eq:paper2-char-approx1}
\psi_s \p{s}{\tau_s} - \div(\bm{D} \bgrad s) = g_s - \p{f}{c} \bv \cdot \bgrad c . 
\end{equation}

\begin{figure}[h!]
\centering
\begin{tikzpicture}[scale=0.7]
\draw [latex-latex] (0,-.5)-- (0,5.5) node [above] {$t$};
\draw[latex-latex] (-.5,0)--(5.5,0) node [right] {$\bx$};
\draw[dashed]	(0,4) --(4,4)
	(0,2) -- (2,2)
	(2,0) -- (2,2)
	(4,0) --(4,4);
\draw [dotted, gray] (0,0) grid (5,5);
\foreach \Point in {(0,2),(0,4),(2,0),(4,0)}{\draw \Point circle[radius=2pt];}
\foreach \Point in {(0,2),(0,4),(2,0),(4,0)}{\fill \Point circle[radius=2pt];}
\draw[thick,solid,blue] (1.5,1.5) -- (4.5,4.5);
\foreach \Point in {(2,2),(4,4)}{\fill[red] \Point circle[radius=3pt];}
\draw	(2,0) node [text=black,below] {$\bar{\bx}_{ij}$}
	(4,0) node [text=black,below] {$\bx_{ij}$}
	(0,4) node [text=black, left] {$t^{n+1}$}
	(0,2) node [text=black, left] {$t^{n}$}
	(2,1.7) node [text=black, right] {$p_2$}
	(4,3.7) node [text=black, right] {$p_1$}; 	
\end{tikzpicture}
\caption{Discrete approximation of the characteristic curve from $\bar{\bx}_{ij}$ to $\bx_{ij}$ in 1D}
\label{fig:paper2-char-method}
\end{figure}
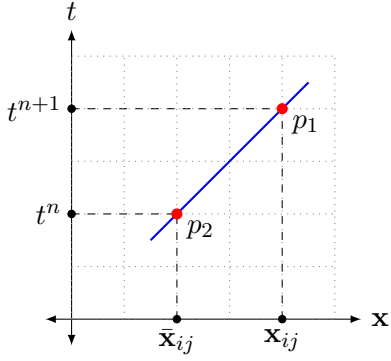

For computation, we use the spatial grid described in the beginning of \Sect{sec:method} and the time interval $[0,T]$ is uniformly divided into $L$ sub-intervals of length $\Delta t$ such that $t^n = n\Delta t$ and $T = L\Delta t$. We denote the grid values of the variables by $w_{ij}^n = w(\bx_{ij},t^n)$ where $\bx_{ij} = \bx(ih,jh)$. Consider that the solution is known at some time $t^n$ and the solution at a subsequent time $t^{n+1}$ needs to be computed. Then starting from any point $(\bx_{ij},t^{n+1})$ we trace backward along the characteristics to a point $(\bar{\bx}_{ij},t^n)$ where the solution is already known.  As shown in \Cref{fig:paper2-char-method}, the points $p_1=(\bx_{ij},t^{n+1})$ and $p_2=(\bar{\bx}_{ij}, t^n)$ lie on the same characteristic curve.
From the equation of the characteristic curves given by
\begin{align*}
 \frac{d\bx}{d\tau_s} =\frac{1}{\phi}\p{f}{s}\bv,  % \qquad \& \qquad \frac{dt}{d\tau_s} = 1,
\end{align*} 
we use numerical discretization to obtain an approximate value of $\bar{\bx}_{ij}$ in the following way
\begin{align*}
\bar{\bx}_{ij} = \bx_{ij} - \p{f}{s}({s}_{ij}^n,{c}_{ij}^n) \bv_{ij}^n \Delta t/\phi.
\end{align*}
Using the above equation, the derivative in the characteristic direction, defined by \cref{eq:paper2-char-deriv1}, is approximated by
\begin{align*}
\psi_s \p{s}{\tau_s} &\approx \psi_s \frac{s(\bx_{ij},t^{n+1})-s(\bar{\bx}_{ij},t^{n})}{\left[\abs{\bx_{ij}-\bar{\bx}_{ij}}^2+(\Delta t)^2\right]^{1/2}}= \phi \frac{s_{ij}^{n+1}-\bar{s}_{ij}^n}{\Delta t} \quad \text{( see \Sect{sec:analysis} )}.
\end{align*}
This leads to the following implicit-time finite difference formulation for \cref{eq:paper2-char-approx1}\\
\begin{align}\label{eq:paper2-sat}
\phi \frac{s_{ij}^{n+1}-\bar{s}_{ij}^n}{\Delta t}  - \bgrad _h(\bar{\bm{D}}\bgrad_h s)_{ij}^{n+1} &= (g_s)_{ij} -  \left(\p{f}{c}\right)_{ij}^{n} \left( \textbf{v}_{ij}^n  \cdot \bgrad_h c_{ij}^n\right),
\end{align}
where
\begin{align*}
\bar{s}_{ij}^n &= s(\bar{\bx}_{ij},t^{n})  \qquad \& \qquad \bar{\bm{D}}_{ij}^n = \bm{D}(\bar{s}_{ij}^n, c_{ij}^n), \\
\bgrad _h(\bar{\bm{D}}\nabla_h s)_{ij}^{n+1} &= \bar{\bm{D}}_{i+1/2,j}\frac{s_{i+1,j}^{n+1}-s_{i,j}^{n+1}}{\Delta x ^2} - \bar{\bm{D}}_{i-1/2,j}\frac{s_{i,j}^{n+1}-s_{i-1,j}^{n+1}}{\Delta x ^2}\\
&+\bar{\bm{D}}_{i,j+1/2}\frac{s_{i,j+1}^{n+1}-s_{i,j}^{n+1}}{\Delta y ^2}-\bar{\bm{D}}_{i,j-1/2}\frac{s_{i,j}^{n+1}-s_{i,j-1}^{n+1}}{\Delta y ^2},\\
\bar{\bm{D}}_{i\pm1/2,j}&= \frac{\bm{D}(\bar{s}_{i\pm1,j}^n,c_{i\pm1,j}^n) + \bm{D}(\bar{s}_{i,j}^n,c_{i,j}^n)}{2},\\
\bar{\bm{D}}_{i,j\pm 1/2}&= \frac{\bm{D}(\bar{s}_{i,j\pm1}^n,c_{i,j\pm1}^n) + \bm{D}(\bar{s}_{i,j}^n,c_{i,j}^n)}{2}.
\end{align*}
Following the same procedure as before we define the following equations, analogous to \cref{eq:paper2-char-deriv1,eq:paper2-char-norm1}, for the concentration equation \cref{paper2-satpol}.
\begin{align*}
\frac{\partial}{\partial \tau_c} &= \frac{1}{\psi_c}\left(\phi \frac{\partial}{\partial t} + \frac{f}{s} \textbf{v} \cdot \nabla - \frac{\bm{D}}{s}\nabla s\cdot \nabla  \right),\\
\psi_c &= \left[\phi^2 + \left(\frac{f}{s}\right)^2 \abs{\textbf{v}}^2 - \left(\frac{\bm{D}}{s}\right)^2\abs{\nabla s}^2\right]^{1/2}.
\end{align*}
The advection term $\phi \p{c}{t} + \frac{f}{s} \bv \cdot \bgrad c - \frac{\bm{D}}{s} \bgrad s \cdot \bgrad c$ is replaced by the derivative in the characteristic direction $\tau_c$ given by $\psi_c \frac{\partial}{\partial \tau_c}$ where 
\begin{align*}
\psi_c \p{c}{\tau_c} &\approx \psi_c \frac{c(\bx_{ij},t^{n+1})-c(\bar{\bx}^c_{ij},t^{n})}{\left[\abs{\bx_{ij}-\bar{\bx}_{ij}}^2+(\Delta t)^2\right]^{1/2}}= \phi \frac{c_{ij}^{n+1}-\bar{c}_{ij}^n}{\Delta t}. 
\end{align*}
Here, $\bar{c}_{ij}^n = c(\bar{\bx}^c_{ij},t^n)$ is computed using an approximate value of $\bar{\bx}^c_{ij}$ given by 
\begin{align*}
&\bar{\bx}^c_{ij}={\bx}^c_{ij} - \left( \left(\frac{f}{s}\right)({s}_{ij}^n,{c}_{ij}^n) \bv - \left(\frac{\bm{D}}{s}\right) (\bar{s}^n_{ij},{c}_{ij}^n)\nabla s\right) \Delta t/\phi,
\end{align*}
where, as before, $(\bar{\bx}^c_{ij},t^n)$ and $({\bx}^c_{ij},t^{n+1})$ lie on the same characteristic curve. Here the superscript `c' is used to denote the characteristic curves associated with the polymer transport equation \Eq{paper2-satpol}.
Thus we arrive at the following implicit-time finite difference formulation for \Eq{paper2-satpol}\\
\begin{align}\label{eq:paper2-pol}
\phi \frac{c_{ij}^{n+1}-\bar{c}_{ij}^n}{\Delta t} +(g)_{ij}^n c^{n+1}_{ij}&= (g_c)_{ij}^n.
\end{align}
Hence \Eq{eq:paper2-sat} and \Eq{eq:paper2-pol} form the finite difference approximation of the transport equations, \Eq{paper2-satnum} and \Eq{paper2-satpol} respectively. 

The pseudocode (see Algorithm~\ref{eor}) for the method is given below. Here, $s_0^{\sigma0}$ is the initial resident wetting phase saturation or the amount of water (wetting phase) present in the reservoir before the flood simulation starts and $Q = \abs{q}$ is the volumetric flux at the injection/production points.

\begin{algorithm}[ht]
\caption{Polymer flooding simulation}\label{eor}
\begin{algorithmic}[1]
\Procedure{}{}

\noindent \emph{Set up Cartesian grid and FE Mesh and a permeability field}
\vskip .4ex
\State $i,j \gets 1, \ldots , N; \; h \gets \dfrac{1}{N}$ \Comment{($N\times N$ \textit{is the grid size})}
\State  $\Sigma \gets $ \textit{Initial interface} \Comment{$\Sigma = \dl \Omega^+ \cup \dl \Omega^-$}
\State $\bK(\bx) \gets \textit{choose type of heterogeneity}$
\vskip 1ex
\noindent \emph{Set model parameters\;} 
\vskip .4ex
\State $\mu_o, \, \mu_w, \, s_{ro}, \, s_{ra}, \, Q, \, c_0, \, s_0^{\sigma0} \gets \textit{values from \Cref{table:paper2-input_par}}$
\vskip 1ex
\noindent \emph{Initialization\;}
\vskip .4ex
\State $(s,c) \gets \begin{cases}
    (1 - s_{ro}, c_0)  \hfill &  x \in \Omega^+\\
    (s_0^{\sigma0}, 0) \hfill &  x \in \Omega^-\\
    \end{cases}\;$
%\vskip 1ex
\State $t \gets 0$ \; \State $\Delta t \gets \textit{value} $\; \Comment{$\Delta t$ chosen for desired accuracy}
\vskip 1ex
\noindent \emph{Computation loop\;}
\vskip .4ex
\While{$\left(s(\bx_{N,N},t) \leq  1-s_0^{\sigma0} \text{ and } t < Tstop \right)$}
 \State Compute $\{\mu_a, \, \lb_a, \, \lb_o, \, \lb, \, p_c\}\; \text{using} \;(s^n, c^n,\bv^{n-1})$
 \State Solve the global pressure equation for $p^{n}, \bv^n \;$ 
 \State Recompute $\{\mu_a, \, \lb_a, \, \lb_o, \, \lb, \, p_c\}\; \text{using} \; (s^n, c^n, \bv^{n})$
 \State Solve the transport equations for $s^{n+1}$ and $c^{n+1}$	
 \State $t \gets t+ \Delta t$
 \State \textbf{close};
\EndWhile
\EndProcedure
\end{algorithmic}
\end{algorithm}

\section{Convergence study and error analysis}\label{sec:analysis}

Let $s_i^n = s(x_i,t^n)$ be the grid values of the true solution of the saturation equation \eqref{paper2-satnum} and  $w_i^n = w(x_i,t^n)$ be the grid values of the discrete approximate solution where $x_i = ih$ and $t^n = n\Delta t$. Similarly, let $p_i^n$ and $r_i^n$ be the grid values of the true and the discrete approximate solutions respectively of \cref{eq:paper2-ell-1}, $c^n_i$ and $m_i^n$ be the grid values of the true and the discrete solutions respectively of \cref{paper2-satpol}, and finally, let $v_i^n$ and $z_i^n$ be the grid values of the true and the discrete approximate solutions respectively of the total velocity given by $\bv = - \bK \lb \bgrad p$. Let the numerical approximation errors be defined as
\begin{align}\label{error-def}
\zeta^n_i = s_i^n - w_i^n, \quad \pi^n_i = p_i^n - r_i^n,\quad \&\quad \theta^n_i = c_i^n - m_i^n.
\end{align}
We define the following discrete norms for any $u \in W^{l,p}(\Omega)$, $v \in L^2(\Omega)$ and $w \in L^\infty(\Omega)$ where $\Omega = [0,1]^2$. 
\begin{align*}
&\nm{u}_{l,p} = \left(\sum_{k=0}^l \bigg( \sum_i h\abs*{\frac{d^ku_i}{dx^k}}^p \bigg) \right)^{1/p}, \quad  \abs{u}_{l,p} = \left(\sum_{i}h\abs*{\frac{d^lu_i}{d x^l}}^p \right)^{1/p}\\
&\nm{v} = \left(\sum_{i}h\abs*{v_i}^2 \right)^{1/2}, \qquad \nm{w}_\infty = \max_{i}{\abs{w_i}}
\end{align*}
In particular, $\nm{.}$ and $\langle .\rangle$ denote the discrete $L^2$ norm and the associated inner product respectively. 

For the analysis, we consider a reduced system of equations (see \cref{paper2-satnum,paper2-satpol}) in one spatial dimension as given by
\begin{subequations}
\begin{align}
\phi \pdt{s} + b \pdx{s} - \pdx{}\left(D \pdx{s}\right) &= F, \qquad s(x,0) = s_0(x) ; \quad x \in \Omega \setminus \dl \Omega \label{conv-sat1}\\
\phi \pdt{c} + a \pdx{c} + G c &= H , \qquad c(x,0) = c_0(x) ; \quad x \in \Omega \setminus \dl \Omega \label{conv-conc1}
\end{align}
\end{subequations}
where $b(s,c) = \p{f}{s} v$, $a(s,c) = (\frac{f}{s}v -\frac{D}{s}\pdx{s})$, $F(s,c)=g_s -  v \p{f}{c} \pdx{c}$, $G(s) = g$ and $H(s)=g_c$. Then the characteristic finite difference approximation of \Eq{conv-sat1} and \Eq{conv-conc1} are given by
\begin{align}
\phi_i \frac{w_i^n -\bar{w}_i^{n-1}}{\Delta t} - \delta_{x} (\bar{D} \delta_x w^n)_i &= F_i^n \qquad w_i^0 = s_0(x_i),\label{conv-num1} \\
\phi_i \frac{m_i^n -\bar{m}_i^{n-1}}{\Delta t} + G_i^n m_i^n &= H_i^n \qquad m_i^0 = c_0(x_i), \label{conv-num11}\\
\text{ where } \bar{w}_i^{n-1} = w(\tilde{x}^s_i,t^{n-1}), &\quad \tilde{x}^s_i = x_i - b(w_i^{n-1},m^{n-1}_i)\Delta t/\phi_i, \label{time-deriv1}\\ 
\text{ and } \bar{m}_i^{n-1} = m(\tilde{x}^c_i,t^{n-1}), &\quad \tilde{x}^c_i = x_i - a(w_i^{n},m^{n-1}_i)\Delta t/\phi_i. \label{time-deriv2}
\end{align}
From \Eq{conv-sat1} and \Eq{conv-num1} we have the following 
\begin{subequations}
\begin{align}
\psi_s \frac{\dl s}{\dl \tau} - \pdx{}\left(D \pdx{s}\right) &= F(s^n,c^n), \qquad s(x,0) = s_0(x) , \label{conv-sat2}\\
\phi_i \frac{w_i^n -\bar{w}_i^{n-1}}{\Delta t} - \delta_{x} (\bar{D} \delta_x w^n)_i &= F(w^n_i,m^n_i) \qquad w_i^0 = s_0(x_i). \label{conv-num2}
\end{align}
\end{subequations}
In the following analysis $M$, $\tilde{M}$, $\hat{M}$, $M_k (k \in \mathbb{Z}^+)$ and $C$ are generic constants, that depend on higher order norms of the true solutions $s,c$ and $p$, and are independent of the time step $\Delta t$ and space discretization $h$. We make assumptions on the smoothness of the coefficients of porosity and absolute permeability, that yield the bounds $0<\phi_*\leq \phi(x) \leq \phi^*$ and $\bK_* \leq \bK \leq \bK^* $. These have relatively minor mathematical significance in comparison to the other hypotheses introduced here. 
Note that the capillary-induced diffusion coefficients are non-negative, \ie $D,\bar{D} \geq 0$. This is ensured by the definition of capillary pressure, given in \Eq{paper2-cap}, which implies that $\frac{d \, p_c}{d \,s} \leq 0$. However, in the rest of the analysis (as in \cite{D1983}), the diffusion coefficients will be assumed to be bounded below by a positive constant \ie $D,\bar{D} \geq D_*$. Physically this is equivalent to treating problems in which $0<s_0^{\sigma0} \leq s \leq 1-s_{ro} $. The lower bound of $s$ also ensures that the functions $(f/s)$ and $(D/s)$ appearing in the component transport equation (\ref{paper2-satpol}) are well defined. Moreover, if and when required in the analysis, the auxiliary functions $f(s,c), \lb(s,c), D(s,c)$ have been assumed to have sufficient regularity in the discrete Sobolev norms. This follows from the definitions of relative permeabilities, capillary pressure and the aqueous phase viscosity given in \cref{paper2-eq:relperm1,paper2-eq:relperm2,paper2-eq:cap,paper2-eq:vis} which guarantee that the auxiliary functions and their higher order derivatives are continuous and bounded. Finally, the assumption of smoothly distributed external flow rates $q_a$ and $q_o$ implies that the source terms are sufficiently smooth and bounded as given by $\abs{F}, \abs{H} \leq M$. 

The primary focus in the analysis is on the convergence behavior of the discrete approximation to the transport equations. Hence, the approximation error for the pressure discretization using the non-traditional discontinuous finite element method will be estimated by \cite{HWW2010},
\begin{align}\label{errorD}
 \nm*{\pdx{\pi^n}}_{\infty} = O(h). 
\end{align} 
The regularity assumptions on the source terms ensure that the conditions of Theorem 2.1 in \cite{HWW2010} are met. This, along with the normalization constraint on pressure, guarantees the existence and uniqueness of a weak solution~\cite{HWW2010,HL2005} to the elliptic pressure \cref{eq:paper2-ell-1} and thus validates the convergence estimate in \cref{errorD}. The numerical scheme converges for any other finite element formulation that preserves or improves upon the above error estimate.

The salient steps in the analysis are outlined as follows. The numerical method is decomposed into a few fundamental approximation steps and the errors introduced at each step are estimated, namely, the discretization procedure, the linear interpolation steps and the linearization of coefficients. In the first part of the analysis, we study the aqueous phase transport equation \eqref{conv-sat1}, followed by the component (polymer) transport equation \eqref{conv-conc1}. Finally the results are combined together to obtain an unified error estimate and to prove the convergence of the numerical method for the transport system. 

To begin, the discretization error for the characteristic derivative and the second order spatial derivative are estimated using Taylor series expansions. Then, the error associated with the linear interpolation of nodal values to compute solutions at points where the characteristic curves intersect the computational grid are estimated using a combination of the Peano kernel theorem and Taylor series expansions. This is done in several steps as an exact characteristic curve and its numerical approximation may intersect the computational grid at different points leading to further interpolation errors. The transport \cref{conv-sat2,conv-num2} are combined with these estimates to obtain a representation in terms of the inner products of the error variables $\zeta_i^n$, $\theta_i^n$ and $\pi_i^n$ given by \cref{error-def}. This is followed by estimation of the time discretization error, and the error due to linearization of the auxiliary functions, their derivatives and the capillary dissipation term using Cauchy-Schwarz inequality, a modified Young's inequality and some relevant results from literature. Finally an application of the discrete Gr\"{o}nwall's inequality leads us to the partial convergence result \eqref{conv-sat1} for the water transport equation. Following a similar procedure with the polymer transport equation and combining the estimates together, we obtain the final convergence result \eqref{final-convergence} and the error estimates \eqref{final-estimate} for the transport variables. 

For the rest of the analysis of the water saturation equation \eqref{conv-num2}, with slight abuse of notation, we  will write $\tilde{x}_i^n$ and $\bar{x}_i^n$ to mean $\tilde{x}_i^{s,n}$ and $\bar{x}_i^{s,n}$ respectively. In the next two lemmas we estimate the errors introduced by approximating the derivative in the characteristic direction and the second order derivative term in \cref{conv-sat2} with their finite difference discretizations given in \cref{conv-num2}.
\begin{lemma}\label{lem:char-error}
The error in approximating the characteristic derivative in \cref{conv-sat2} is given by
\begin{align*} 
\psi_{s,i} \left(\p{s}{\tau}\right)_i^n - \phi_i \frac{s_i^n -\bar{s}_i^{n-1}}{\Delta t} = O\left(\left|\frac{\dl^2 s^{*}}{\dl \tau^2}\right| \Delta \tau\right),
\end{align*}
\end{lemma}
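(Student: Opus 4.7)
The plan is to exploit the fact that the two points $(x_i, t^n)$ and $(\tilde{x}_i^{n-1}, t^{n-1})$ lie (to leading order in $\Delta t$) on the same characteristic curve of the saturation equation, and then to Taylor expand $s$ in the characteristic parameter $\tau$ between these two points. With the normalization $\psi_s$ from \eqref{eq:paper2-char-norm1}, the characteristic satisfies $dt/d\tau = \phi/\psi_s$ and $dx/d\tau = (\partial f/\partial s) v /\psi_s$, so $\phi \, \partial/\partial t + (\partial f/\partial s) v \, \partial/\partial x = \psi_s \, \partial/\partial \tau$.

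First I would compute the arclength increment $\Delta\tau$ between the two points. Using $\tilde{x}_i^{n-1} - x_i = -(\partial f/\partial s)(w_i^{n-1},m_i^{n-1}) v_i^{n-1} \Delta t/\phi_i$ from \eqref{time-deriv1}, one gets $(\Delta\tau)^2 = (\Delta t)^2 + (x_i - \tilde{x}_i^{n-1})^2 = (\Delta t)^2\bigl(1 + ((\partial f/\partial s) v/\phi)^2\bigr)$, so $\Delta\tau = \psi_{s,i}\Delta t/\phi_i$ up to higher-order terms. Equivalently, $\phi_i/\Delta t = \psi_{s,i}/\Delta\tau$, which is the key identity that converts a difference quotient in $t$ into one in $\tau$ with the prefactor $\psi_{s,i}$ precisely as needed.

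Next I would Taylor expand $s$ along the characteristic starting at $(x_i,t^n)$ and running backward to $(\tilde{x}_i^{n-1}, t^{n-1})$:
\begin{equation*}
s(\tilde{x}_i^{n-1}, t^{n-1}) = s_i^n - \Delta\tau \left(\p{s}{\tau}\right)_i^n + \frac{(\Delta\tau)^2}{2}\left(\frac{\dl^2 s}{\dl\tau^2}\right)^{\!*},
\end{equation*}
with $*$ denoting an intermediate point on the characteristic. Rearranging, multiplying by $\phi_i/\Delta t = \psi_{s,i}/\Delta\tau$, and transposing yields
\begin{equation*}
\psi_{s,i}\left(\p{s}{\tau}\right)_i^n - \phi_i \frac{s_i^n - \bar{s}_i^{n-1}}{\Delta t} = \frac{\psi_{s,i}\Delta\tau}{2}\left(\frac{\dl^2 s}{\dl\tau^2}\right)^{\!*},
\end{equation*}
which is the stated bound once $\psi_{s,i}$ is recognized as uniformly bounded under the standing regularity assumptions.

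The one delicate point—and the step I expect will need the most care in writing out—is that $\tilde{x}_i^{n-1}$ is only a first-order (left-endpoint) discretization of the true foot of the characteristic through $(x_i, t^n)$, so the point $(\tilde{x}_i^{n-1}, t^{n-1})$ misses the true characteristic by a distance $O((\Delta t)^2)$. Expanding $s(\tilde{x}_i^{n-1}, t^{n-1})$ around the true foot using boundedness of $\partial_x s$ contributes an extra $O((\Delta t)^2)$ term to the numerator, which, after division by $\Delta t$, is again $O(\Delta t) = O(\Delta\tau)$ and can be absorbed into the error on the right-hand side. This justifies treating $(\tilde{x}_i^{n-1}, t^{n-1})$ as if it lay exactly on the characteristic for the purpose of the Taylor expansion above.
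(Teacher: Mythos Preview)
Your approach---Taylor expansion along the characteristic chord, combined with the key identity $\phi_i/\Delta t = \psi_{s,i}/\Delta\tau$---is exactly what the paper does. However, you have mixed up two distinct backtracked points. The lemma is stated for $\bar{s}_i^{n-1} = s(\bar{x}_i,t^{n-1})$ with $\bar{x}_i = x_i - b(s_i^n,c_i^n)\,\Delta t/\phi_i$, i.e.\ the foot computed using the \emph{exact} coefficients, whereas you work throughout with $\tilde{x}_i^{n-1}$ from \eqref{time-deriv1}, which uses the \emph{numerical} values $(w_i^{n-1},m_i^{n-1})$. These are different points, and the distance between them is what the paper later bounds separately as ``term A'' in \eqref{errorC1}--\eqref{errorH}.

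If you simply replace $\tilde{x}_i^{n-1}$ by $\bar{x}_i$ everywhere in your first two paragraphs, your argument becomes identical to the paper's: the chord from $(x_i,t^n)$ to $(\bar{x}_i,t^{n-1})$ has direction $(b(s_i^n,c_i^n),\phi_i)/\psi_{s,i}$ exactly, so $\Delta\tau = \psi_{s,i}\Delta t/\phi_i$ holds on the nose, and your ``delicate point'' paragraph is then unnecessary for this lemma. The paper in fact takes $\bar{x}$ \emph{as} the intersection of the (linearized) characteristic with $t=t^{n-1}$ and simply Taylor expands along that straight segment; the curvature correction you worry about is absorbed into the meaning of $\partial^2 s^*/\partial\tau^2$ along the chord.
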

where $\bar{s}_i^{n-1} = s(\bar{x}_i,t^{n-1})$ with $\bar{x}_i = x_i - b(s^n_i,c^n_i) \Delta t/\phi_i$.
\begin{proof} 
Let $p_1 = (x,t^n)$ be a point on the grid (see \Fig{fig:paper2-char-method}) and the characteristic that passes through this point intersects the previous time level at $p_2 = (\bar{x},t^{n-1})$ where $\bar{x} = x - b(s,c)/\phi(x) \Delta t$ and let $\Delta \tau = \left[(x-\bar{x})^2 + (t^n -t^{n-1})^2\right]^{1/2}$. Hence $\Delta \tau = \frac{\psi_s}{\phi} \Delta t$.  Using the Taylor series expansion along the characteristic direction, we write
\begin{align*}
 &s(p_1-\Delta \tau) = s(p_1) - \Delta \tau \p{s}{\tau} + \frac{\Delta \tau^2}{2} \frac{\dl^2 s^*}{\dl \tau^2}, \ %\quad p_1^{*} \in (p_1,p_2). \ 
\shortintertext{where $\frac{\dl^2 s^*}{\dl \tau^2}$ is some evaluation of the second derivative along the characteristic segment between $p_2$ and $p_1$. In the convection dominated case, this second derivative is relatively much smaller than $\frac{\dl ^2 s}{\dl x^2}$ or $\frac{\dl ^2 s}{\dl t^2}$ \cite{DR1982}. This is rewritten as}
 &\Delta \tau \p{s^n}{\tau} = s^n -\bar{s}^{n-1} + \frac{\Delta \tau^2}{2} \frac{\dl^2 s^*}{\dl \tau^2} .\
\shortintertext{Using $\displaystyle{\frac{\psi_s}{\Delta \tau} = \frac{\phi}{\Delta t}}$ we obtain}
 &\psi_s \p{s^n}{\tau} - \phi \frac{s^n -\bar{s}^{n-1}}{\Delta t} = \frac{\phi}{\Delta t} \frac{\Delta \tau^2}{2} \frac{1}{2} \frac{\dl^2 s^*}{\dl \tau^2} = \frac{\psi_s}{\Delta \tau} \frac{\Delta \tau^2}{2} \frac{\dl^2 s^*}{\dl \tau^2}  = \frac{\psi_s}{2}\Delta \tau \frac{\dl^2 s^*}{\dl \tau^2} .\
\shortintertext{This leads to the final result:}
 &\psi_{s,i} \left(\p{s}{\tau}\right)^n_i - \phi_i \frac{s_i^n -\bar{s}_i^{n-1}}{\Delta t} = O\left(\left|\frac{\dl^2 s^{*}}{\dl \tau^2}\right| \Delta \tau\right).  \qed
\end{align*}
\end{proof}
Using Lemma \ref{lem:char-error} in \Eq{conv-sat2}, we estimate the  error introduced by numerical discretization of the characteristic derivative as 
\begin{align}\label{errorA}
\phi_i \frac{s_i^n -\bar{s}_i^{n-1}}{\Delta t} - \pdx{}\left(D \pdx{s^n}\right)_i &= F(s^n_i,c^n_i) + O\left(\left|\frac{\dl^2 s^{*}}{\dl \tau^2}\right| \Delta \tau\right).
\end{align}

Now we estimate the approximation error for the second order derivative term in the left hand side of \Eq{conv-sat2}. By definition,
\begin{align}
&\delta_{x} (\bar{D} \delta_x w^n)_i = \frac{1}{h} \left( \bar{D}_{i+1/2}(\delta_x w^n)_{i+1/2} - \bar{D}_{i-1/2}(\delta_x w^n)_{i-1/2} \right) \nonumber \\
&\qquad \qquad \quad = \frac{1}{h^2} \left( \bar{D}_{i+1/2}(w_{i+1}^n- w_i^n) - \bar{D}_{i-1/2}(w_i^n - w_{i-1}^n) \right), \label{errorA-1} \\
%\end{align}
&\text{ where} \nonumber \\ 
%\begin{align*}
&\bar{D}_{i+1/2} = \frac{1}{2} \left[D(x_i,\bar{w}^{n-1}_i) + D(x_{i+1},\bar{w}^{n-1}_{i+1}) \right] \quad \& \quad 
\bar{D}_{i-1/2} = \frac{1}{2} \left[D(x_i,\bar{w}^{n-1}_i) + D(x_{i-1},\bar{w}^{n-1}_{i-1}) \right]. \nonumber 
\end{align}
The numerical approximation of the second order derivative in \Eq{conv-sat1} is given by
\begin{align*}
&\delta_x(D \delta_x s^n)_i = \frac{1}{h^2} \left[D_{i+1/2} (s^n_{i+1} -s^n_i) - D_{i-1/2} (s^n_{i} -s^n_{i-1}) \right] , \\
&\text{ where} \\
&D_{i+1/2} = \frac{1}{2} \left[D(x_i,s^n_i) + D(x_{i+1},s^n_{i+1}) \right] \quad \& \quad
D_{i-1/2} = \frac{1}{2} \left[D(x_i,s^n_i) + D(x_{i-1},s^n_{i-1}) \right].
\end{align*}

\begin{lemma}\label{lem:secdev-error}
The finite difference approximation error of the second derivative term in \cref{conv-sat2} is given by
\begin{align*}
\frac{d}{dx} \left( D \frac{d}{dx} s^n \right)_i - \delta_{x} (D \delta_x s^n)_i = O(h \nm*{s^n }_{3,\infty}).
\end{align*}
\end{lemma}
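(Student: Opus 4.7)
The plan is to reduce the whole expression to two Taylor-expansion pieces at the midpoints $x_{i\pm 1/2}$, collect the leading smooth term, and track all remainders in terms of $\|s^n\|_{3,\infty}$.

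First I would introduce the auxiliary function $\Phi(x) = D(x, s^n(x))\, s^n_x(x)$, so that the exact side is simply $\Phi'(x_i)$. The structure of $\delta_x(D \delta_x s^n)_i$ suggests comparing $D_{i\pm 1/2}\,(s^n_{i\pm 1}-s^n_{i\mp 0})/h$ with $\Phi(x_{i\pm 1/2})$, and then applying a central difference to $\Phi$ itself. So the natural two-step decomposition is
\begin{equation*}
\Phi'(x_i) - \delta_x(D\delta_x s^n)_i
= \Bigl[\Phi'(x_i) - \tfrac{1}{h}\bigl(\Phi(x_{i+1/2})-\Phi(x_{i-1/2})\bigr)\Bigr]
+ \tfrac{1}{h}\Bigl[\bigl(\Phi(x_{i+1/2}) - A_{i+1/2}\bigr) - \bigl(\Phi(x_{i-1/2}) - A_{i-1/2}\bigr)\Bigr],
\end{equation*}
where $A_{i\pm 1/2} := D_{i\pm 1/2}\,(s^n_{i\pm(1/2)+1/2}-s^n_{i\pm(1/2)-1/2})/h$ are the two half-step fluxes of the scheme.

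Next I would Taylor-expand each factor in $A_{i+1/2}$ around $x_{i+1/2}$. For the coefficient factor, since $D_{i+1/2}$ is the symmetric average $\frac{1}{2}[D(x_i,s^n_i)+D(x_{i+1},s^n_{i+1})]$, the odd-order terms cancel and I obtain $D_{i+1/2} = D(x_{i+1/2}, s^n(x_{i+1/2})) + O(h^2\|s^n\|_{2,\infty})$ (the chain rule applied to $g(x) := D(x, s^n(x))$ gives $g'' \lesssim \|s^n\|_{2,\infty}$). For the difference factor, a similar symmetric expansion yields
\[
\frac{s^n_{i+1}-s^n_i}{h} = s^n_x(x_{i+1/2}) + O(h^2\,\|s^n\|_{3,\infty}).
\]
Multiplying and absorbing the crossed remainder terms under the assumed smoothness of $D$ gives $A_{i+1/2} = \Phi(x_{i+1/2}) + O(h^2\|s^n\|_{3,\infty})$, and analogously at $i-1/2$. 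The second bracket in the decomposition above is therefore $O(h\|s^n\|_{3,\infty})$ after dividing by $h$.

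For the first bracket, I would apply a centered-difference estimate to $\Phi$ itself at $x_i$. Since $\|s^n\|_{3,\infty}$ only controls two derivatives of $\Phi$ (via $\Phi'' = D_{xx}s^n_x + 2D_{xs}(s^n_x)^2 + D_{ss}(s^n_x)^3 + D_s s^n_{xx} + \cdots + D s^n_{xxx}$ in its rough schematic form), Taylor's theorem with remainder gives
\[
\Phi'(x_i) - \frac{\Phi(x_{i+1/2})-\Phi(x_{i-1/2})}{h} = O(h\,\|\Phi''\|_\infty) = O(h\,\|s^n\|_{3,\infty}).
\]
Combining the two pieces yields the claimed $O(h\,\|s^n\|_{3,\infty})$ bound. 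The only mildly tricky point is ensuring one does not accidentally lean on $s^n_{xxxx}$: a centered scheme is formally second-order, but because we are restricted to three derivatives of $s^n$, we get only first-order accuracy, and the bookkeeping above has to confirm that the $O(h^2)$ contributions in $A_{i\pm 1/2}$ survive division by $h$ without inflating the regularity demand. No part of the argument poses a serious obstacle; it is essentially a careful multivariate Taylor accounting, with the only care needed in the chain-rule bounds on derivatives of $g(x)=D(x,s^n(x))$ and $\Phi(x)$.
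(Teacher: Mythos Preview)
Your proposal is correct and follows essentially the same approach as the paper: both introduce the flux $\Phi = D\,s^n_x$, split the error into an outer centered-difference piece $\Phi'(x_i) - h^{-1}(\Phi_{i+1/2}-\Phi_{i-1/2})$ and an inner flux-approximation piece $\Phi_{i\pm 1/2} - D_{i\pm 1/2}(s^n_{i\pm 1}-s^n_i)/h$, and bound each by Taylor expansion. Your accounting is in fact slightly more careful than the paper's in two places: you explicitly handle the fact that $D_{i+1/2}$ is a two-point average rather than a pointwise midpoint value, and you deliberately use only the $O(h\|\Phi''\|_\infty)$ form of the centered-difference remainder so as not to touch $s^n_{xxxx}$, whereas the paper's final line passes through an $O(h^2\|s^n\|_{4,\infty})$ term before absorbing it.
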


\begin{proof} From the Taylor series expansion, we know that
\begin{align*}
&\left(\frac{du}{dx}\right) - \frac{u(x+h/2)-u(x-h/2)}{h} = O \left(h^2 \nm{u}_{3,\infty} \right). \
\shortintertext{This can be rewritten as}
&\left(\frac{du}{dx}\right)_i - \delta_x(u_i) = O(h^2 \nm{u}_{3,\infty}). \
\shortintertext{Using this estimate for the second derivative term, we obtain} 
& \frac{d}{dx} \left(D \frac{d}{dx} s^n\right)_i - \delta_{x} (D \delta_x s^n)_i = O\left(h^2 \nm*{D \frac{d}{dx} s^n}_{3,\infty}\right). \
\shortintertext{Using the defintion \eqref{errorA-1}, we obtain}
& \frac{d}{dx} \left(D \frac{d}{dx} s^n\right)_i - \frac{1}{h} \left( \left(D\frac{d s^n}{dx}\right)_{i+1/2} -\left(D\frac{d s^n}{dx}\right)_{i-1/2} \right) = O\left(h^2 \nm*{D \frac{d}{dx} s^n}_{3,\infty}\right). \
\shortintertext{Using $\left(D\dfrac{d s^n}{dx}\right)_{i+1/2} = D_{i+1/2} \left(\dfrac{ds^n}{dx}\right)_{i+1/2} = D_{i+1/2} \left(\dfrac{s^n_{i+1} -s_i^n}{h}\right) +  O \left(h^2 \nm{s^n}_{3,\infty} \right) $, we continue as}
&\frac{d}{dx} \left(D \frac{d}{dx} s^n \right)_i  - \frac{1}{h} \left[ D_{i+1/2} \frac{s^n_{i+1} -s_i^n}{h}  O(h^2 \nm{s^n}_{3,\infty})\right]\\
& \qquad \qquad +\frac{1}{h}\left[ D_{i-1/2} \frac{s^n_{i} -s_{i-1}^n}{h} + O(h^2 \nm{s^n}_{3,\infty}) \right] = O\left(h^2 \nm*{D}_{\infty} \, \nm*{\frac{d s^n}{dx}}_{3,\infty}\right).
\end{align*}
This leads to the final estimate
\begin{align*}
\frac{d}{dx} \left(D \frac{d}{dx} s^n\right)_i  &- \frac{1}{h} \left[ D_{i+1/2} \frac{s^n_{i+1} -s_i^n}{h} - D_{i-1/2} \frac{s^n_{i} - s_{i-1}^n}{h} \right] \\
  &= O(h \nm{s^n}_{3,\infty}) + O(h^2 \nm{s^n}_{4,\infty}) = O(h \nm{s^n}_{3,\infty}). \qed
\end{align*}
\end{proof}

Lemmas \ref{lem:char-error} and \ref{lem:secdev-error} will be applied to the difference of \cref{conv-sat2,conv-num2} as the initial step in estimating the numerical approximation error for the saturation equation. Accordingly, using the result of Lemma \ref{lem:secdev-error}, we rewrite \Eq{errorA} as
\begin{align}\label{errorB}
\phi_i \frac{s_i^n -\bar{s}_i^{n-1}}{\Delta t} - \delta_x(D\delta_x s^n)_i = F(s_i^n,c^n_i) + O\left(\left|\frac{\dl^2 s^{*}}{\dl \tau^2}\right| \Delta \tau\right) + O(h \nm*{s^n }_{3,\infty}).
\end{align}
Subtracting \Eq{conv-num2} from \Eq{errorB}, we obtain
\begin{align}\label{errorB1}
\begin{split}
\phi_i \frac{s_i^n -\bar{s}_i^{n-1}}{\Delta t} - \phi_i \frac{w_i^n -\bar{w}_i^{n-1}}{\Delta t} & - \delta_x(D\delta_x s^n)_i + \delta_x(\bar{D}\delta_x w^n)_i \\
 &= F(s_i^n,c^n_i) - F(w_i^n,m_i^n)  + O\left(\left|\frac{\dl^2 s^{*}}{\dl \tau^2}\right| \Delta \tau, h \nm*{s^n }_{3,\infty} \right). 
\end{split}
\end{align}
Using the definition of saturation error $\zeta^n_i$ from \cref{error-def} in \cref{errorB1} and rearranging terms, we rewrite the above as
\begin{align}
\phi_i \frac{\zeta_i^n -(\bar{s}_i^{n-1} - \bar{w}_i^{n-1})}{\Delta t} &- \delta_x(\bar{D}\delta_x \zeta^n)_i = F(s_i^n,c^n_i) - F(w_i^n,m_i^n) \nonumber \\
&+ O\left(\left|\frac{\dl^2 s^{*}}{\dl \tau^2}\right| \Delta \tau, h \nm*{s^n }_{3,\infty} \right) -\delta_x((\bar{D}-D)\delta_x s^n)_i \, . \label{errorC}
\end{align}

The estimation of the various terms in \cref{errorC} will be carried out sequentially, starting with the estimates of the temporal discretization errors, the bilinear interpolation errors and the numerical approximation of the source terms. 
Consider the first term on the left hand side of \Eq{errorC}. Let $\zeta^n = \Ia \zeta^n_i$  be the piecewise linear interpolant of $\zeta^n_i$ such that $\bar{\zeta}_i^{n-1} = \Ia\zeta^{n-1}(\tilde{x}_i) = \Ia s^{n-1}(\tilde{x}_i) - w^{n-1}(\tilde{x}_i) = \Ia s^{n-1}(\tilde{x}_i) - \bar{w}_i^{n-1}$. Then,
\begin{align}
\zeta_i^n -(\bar{s}_i^{n-1} - \bar{w}_i^{n-1}) &= (\zeta^n_i - \bar{\zeta}^{n-1}_i) + \Ia s^{n-1}(\tilde{x}_i) - \bar{w}_i^{n-1} - \bar{s}_i^{n-1}+ \bar{w}_i^{n-1} \nonumber \\
&= (\zeta^n_i - \bar{\zeta}^{n-1}_i) - \underbrace{(s^{n-1}(\bar{x}_i) - s^{n-1}(\tilde{x}_i))}_{\text{A}} - \underbrace{((1-\Ia)s^{n-1}(\tilde{x}_i))}_{\text{B}}. \label{errorC1}
\end{align}
Below, we find estimates for the last two terms, A and B, of the right hand side of \Eq{errorC1}, followed by the estimate of the source term
$(F(s_i^n,c_i^n) - F(w_i^n,m_i^n))$  on the right hand side of \Eq{errorC}. Once we have these estimates, we can substitute \Eq{errorC1} in \Eq{errorC}, take inner products with $\zeta_i^n$ and use the estimates to rewrite the equation.
\smallskip

\noindent{\underline{\bf A. Estimate of the term A  on the right hand side of \Eq{errorC1}}}: This is
carried out in several steps below.
\begin{align}
&|s^{n-1}(\bar{x}_1) - s^{n-1}(\tilde{x}_i)| \nonumber \\
&\leq \hat{M} \nm{s^{n-1}}_{1,\infty} \, |\bar{x}_i - \tilde{x}_i| \qquad (\hat{M} \text{ is a constant } ) \nonumber\\
& = \hat{M} \nm{s^{n-1}}_{1,\infty}\, \abs*{ \frac{\dl f}{\dl s}(w_i^{n-1},m^{n-1}_i) z_i^{n-1} - \frac{\dl f}{\dl s}(s_i^n,c^n_i) v_i^n } \frac{\Delta t}{\phi_i} \nonumber\\
& \leq \hat{M} \nm{s^{n-1}}_{1,\infty}\,\left( \abs*{ \frac{\dl f}{\dl s}(w_i^{n-1},m^{n-1}_i)} \underbrace{\abs{ z_i^{n-1}-v_i^n }}_{\text{A-1}} \right. \nonumber \\
& \qquad \qquad \qquad \left. + \abs{ v_i^n} \underbrace{\abs*{ \p{f}{s}(w_i^{n-1},m^{n-1}_i) - \p{f}{s}(s_i^n,c^n_i) }}_{\text{A-2}} \right) \frac{\Delta t}{\phi_i}. \label{errorC2}
\end{align}
Next we estimate the terms A-1 and A-2 of  the right hand side of \eqref{errorC2}.
%\medskip

\noindent{\underline{\bf A-1. Estimate of the term A-1  on the right hand side of \Eq{errorC2}}}: 

\noindent We rewrite the term A-1 as
\begin{equation}\label{errorD1}
 \abs{z_i^{n-1} - v_i^{n}} \leq \underbrace{\abs{z_i^{n-1} - v_i^{n-1}}}_{\text{A-1-1}} +\underbrace{\abs{v_i^{n} - v_i^{n-1}}}_{\text{A-1-2}}
\end{equation}
Recall that $ z_i^{n} = - K \lb(w_i^{n},m_i^{n})\pdx{r_i^{n}}$ and $ v_i^n = - K \lb(s_i^n,c_i^n)\pdx{p_i^n}$. Then 
the first term  A-1-1 on the right hand side of the above inequality  \eqref{errorD1} is written as
\begin{align}
 &\abs{z_i^{n-1} - v_i^{n-1}} \nonumber\\
 =&\abs*{K\lb(w_i^{n-1},m_i^{n-1})\pdx{}(p_i^{n-1}-r_i^{n-1})%\nonumber\\
  \quad +K\left(\lb(s_i^{n-1},c_i^{n-1})-\lb(w_i^{n-1},m_i^{n-1})\right)\pdx{p_i^{n-1}}} \nonumber\\
 \leq & \nm{K}_\infty \, \nm{\lb}_\infty\, \nm*{\pdx{}(\pi_i^{n-1})}_\infty %\nonumber\\
 \quad + \nm{K}_\infty \,\abs*{\lb(s_i^{n-1},c_i^{n-1})-\lb(w_i^{n-1},m_i^{n-1})}\, \nm*{\pdx{}(p_i^{n-1})}_\infty. \label{errorE}
\end{align}
Using Taylor series we write,  
\begin{align}
&\abs*{\lb(s_i^{n-1},c_i^{n-1})-\lb(w_i^{n-1},m_i^{n-1})} \\
& \quad \leq \abs*{s_i^{n-1} -w_i^{n-1}}\, \nm*{\p{\lb}{s}}_\infty
 + \abs*{c_i^{n-1}-m_i^{n-1}} \, \nm*{\p{\lb}{c}}_\infty \nonumber \\
& \quad \leq \bar{M} (\abs*{\zeta_i^{n-1}}+ \abs*{\theta_i^{n-1}}) . \label{errorE1}
\end{align}
Using \Eq{errorE1} and the estimate of the pressure error gradient given by \Eq{errorD} in \Eq{errorE}, we obtain following estimate for the first term A-1-1 of the right hand side of  \eqref{errorD1}.
\begin{align}\label{errorE2}
\abs{z_i^{n-1} - v_i^{n-1}} \leq  M(h+\abs{\zeta_i^{n-1}}+ \abs{\theta_i^{n-1}} ).
\end{align}
To estimate the term A-1-2 of the inequality \eqref{errorD1} we observe
\begin{align}\label{errorE3}
\abs{v_i^n - v_i^{n-1}} \leq \Delta t \nm*{\pdt{v}}_\infty .
\end{align} 
Using \Eq{errorE2} and \Eq{errorE3} in \eqref{errorD1}, we obtain the following estimate for A-1 (see  \Eq{errorC2}).
\begin{align}\label{errorF}
\abs{z_i^{n-1} - v_i^{n}} \leq M(h+\Delta t + \abs{\zeta_i^{n-1}}+ \abs{\theta_i^{n-1}}).
\end{align}
This concludes the estimate for the term A-1 in \Eq{errorC2}. 

\noindent{\underline{\bf A-2. Estimate of the term A-2  on the right hand side of \Eq{errorC2}}}:  
\begin{align}
&\abs*{ \p{f}{s}(w_i^{n-1},m_i^{n-1}) - \p{f}{s}(s_i^n,c_i^n) } \nonumber \\
& \leq \abs*{(w_i^{n-1}-s_i^{n-1})}\nm*{\pderiv[2]{f}{s}}_\infty + \abs*{m_i^{n-1} - c_i^{n-1}}\nm*{\pderiv[1]{f}{c,s}}_\infty \nonumber \\
& \qquad + \abs*{s_i^{n-1} - s_i^n}\nm*{\pderiv[2]{f}{s}}_\infty \nm*{\pdt{s}}_\infty  + \abs*{c_i^{n-1} - c_i^n}\nm*{\pderiv[1]{f}{c,s}}_\infty \nm*{\pdt{c}}_\infty\nonumber\\
&  \leq M (\abs{\zeta_i^{n-1}} + \abs{\theta_i^{n-1}}+\Delta t). \label{errorG}
\end{align}
Using the estimates for A-1 and A-2, as given by \Eq{errorF} and \Eq{errorG} respectively, in \Eq{errorC2} we finally obtain the estimate for the term A of \Eq{errorC1} as
\begin{align}\label{errorH}
\abs{s^{n-1}(\bar{x_i}) - s^{n-1}(\tilde{x_i})} \leq M \Delta t ( \abs{\zeta_i^{n-1}} + \abs{\theta_i^{n-1}} +h + \Delta t).
\end{align}

\noindent{\underline{\bf B. Estimate of the term B  on the right hand side of \Eq{errorC1}}}:  
\smallskip

\noindent Using the Peano kernel Theorem in the spirit of the analysis in Douglas and Russell~\cite{DR1982}, we obtain the following,
\begin{align}\label{pkestimate}
(1-\Ia)s^{n-1}(\tilde{x}_i) = O \left(h^2\nm*{s^{n-1} }_{2,\infty} \right).
\end{align}

\noindent{\underline{\bf C. Estimate of the source term $(F(s_i^n,c_i^n) - F(w_i^n,m_i^n) )$ in \Eq{errorC}}}: 
\begin{align}
F(s_i^n,c_i^n) - F(w_i^n,m_i^n) &\leq \abs{s_i^n - w_i^n} \nm*{\p{F}{s}}_\infty + \abs{c_i^n - m_i^n}\nm*{\p{F}{c}}_\infty \nonumber \\
&\leq M( \abs{\zeta_i^n} + \abs{\theta_i^n}). \label{errorI}
\end{align}

\noindent Equation \eqref{errorC1} is substituted into \Eq{errorC} and the resulting equation is tested against $\zeta_i^n$. Using the estimates \eqref{errorH}, \eqref{pkestimate} and \eqref{errorI} to replace some of the inner products, we rewrite \Eq{errorC} as
\begin{align}\label{errorJ}
\underbrace{\langle \phi_i \frac{\zeta_i^n - \bar{\zeta}_i^{n-1}}{\Delta t} , \zeta_i^n \rangle}_{\text{D-1}} &- \underbrace{\langle \delta_x (\bar{D} \delta_x \zeta^n)_i, \zeta_i^n \rangle}_{\text{D-2}} \leq \langle M \left( h + \Delta t + h^2/ \Delta t + \abs{\zeta_i^{n-1}} + \abs{\theta_i^{n-1}} + \abs{\zeta_i^{n}} + \abs{\theta_i^{n}} \right) , \zeta_i^n \rangle \nonumber \\
 & \qquad \qquad \qquad + \underbrace{\langle \epsilon_i^n , \zeta_i^n \rangle}_{\text{D-3}} - \underbrace{\langle \delta_x ((\bar{D}-D) \delta_x s^n)_i, \zeta_i^n \rangle }_{\text{D-4}} 
\end{align}
where $ \epsilon^n_i = O(\nm{\pderiv[2]{s}{\tau}}_\infty \Delta \tau, \nm{s^n}_{3,\infty} h)$.
Our objective here, would be to estimate the inner products in D-1, D-2, D-3 and D-4 in terms of the norms of the errors $\zeta$ and $\theta$.
\smallskip 

\noindent{\underline{\bf D-1. Estimate of the term D-1}}: The inner product is rewritten as 
\begin{align*}
\langle \phi_i \frac{\zeta_i^n - \bar{\zeta}_i^{n-1}}{\Delta t} , \zeta_i^n \rangle =  \underbrace{\langle \phi_i \frac{\zeta_i^n - {\zeta}_i^{n-1}}{\Delta t} , \zeta_i^n\rangle}_{\text{D-1-1}} - \langle \phi_i \frac{\bar{\zeta}_i^{n-1} - {\zeta}_i^{n-1}}{\Delta t} , \zeta_i^n \rangle
\end{align*}
Using the inequality $ \abs{a-b}\abs{a} \geq \dfrac{\abs{a}^2 - \abs{b}^2}{2}$ we estimate the term D-1-1 as
\begin{align*}
\langle \phi_i \frac{\zeta_i^n - {\zeta}_i^{n-1}}{\Delta t} , \zeta_i^n \rangle \geq \frac{M}{\Delta t} (\nm{\zeta^n}^2 - \nm{\zeta^{n-1}}^2)
\end{align*}

\noindent{\underline{\bf D-2. Estimate of the term D-2 in \Eq{errorJ}}}: 

Using summation by parts, we write
\begin{align*}
\langle \delta_x (\bar{D} \delta_x \zeta^n)_i, \zeta_i^n \rangle = - \langle (\bar{D}\delta_x \zeta^n)_i, (\delta_x \zeta^n)_i \rangle
\end{align*}
and similarly for D-4, we have
\begin{align*}
\langle \delta_x ((\bar{D}-D) \delta_x s^n)_i, \zeta_i^n \rangle = - \langle ((\bar{D}-D)\delta_x s^n)_i, (\delta_x \zeta^n)_i \rangle.
\end{align*}
%\medskip 

\noindent{\underline{\bf D-3. Estimate of the term D-3 in \Eq{errorJ}}}:
Testing the term D-3 against $\zeta^n_i$, we get
\begin{align*}
\langle \epsilon_i^n, \zeta_i^n \rangle \leq M (h+ \Delta t) \sum_i h \abs{\zeta^n_i} \leq M (h^2 + \Delta t^2 + \nm{\zeta^n}^2)
\end{align*}
Substituting the estimates for the term D-1-1 and replacing the terms D-2, D-3 and D-4 as shown before, we rewrite \Eq{errorJ} as
\begin{align}\label{errorK}
\frac{M}{\Delta t} (\nm{\zeta^n}^2 - \nm{\zeta^{n-1}}^2) +\underbrace{\langle (\bar{D}\delta_x \zeta^n)_i, (\delta_x \zeta^n)_i \rangle}_{\text{E-1}} \leq \hat{M} \bigg(\nm{\zeta^n}^2 + \underbrace{\langle \theta^n_i, \zeta_i^n \rangle}_{\text{E-2}} + h^2 + \Delta t^2 + \frac{h^4}{\Delta t^2} \bigg. \nonumber \\
\bigg. + \nm{\zeta^{n-1}}^2 + \nm{\theta^{n-1}}^2\bigg) + \underbrace{\langle \left((\bar{D}-D)\delta_x s^n \right)_i, \left(\delta_x \zeta^n \right)_i \rangle}_{\text{E-3}} + \underbrace{\langle \phi_i \frac{\bar{\zeta}_i^{n-1} - {\zeta}_i^{n-1}}{\Delta t} , \zeta_i^n \rangle}_{\text{E4}}
\end{align}
The next step would be to estimate the inner product terms E-1, E-2, E-3 and E-4 in \Eq{errorK}. The regularity assumptions and lower bound on the capillary induced diffusion coefficients $D$ and $\bar{D}$ will be used in the estimation of E-1 and E-3.
\begin{align*}
&\underline{\textbf{E-1}}: \quad \langle (\bar{D}\delta_x \zeta^n)_i, (\delta_x \zeta^n)_i \rangle \geq  D_* \abs{\zeta^n}^2_{1,2}, \\
&\underline{\textbf{E-2}}: \quad \langle \theta^n_i, \zeta_i^n \rangle \leq M (\nm{\theta^n}^2+ \nm{\zeta^n}^2) \mbox{ (using Cauchy-Schwarz) }, \\
&\underline{\textbf{E-3}}: \quad \langle ((\bar{D}-D)\delta_x s^n)_i, (\delta_x \zeta^n)_i \rangle = \sum_i h \abs{(\bar{D}-D) \delta_x s^n}_i \abs{\delta_x \zeta^n}_i \\
& \qquad \qquad \qquad \leq \nm*{\pdx{s_i^n}}_\infty \sum_i h \abs*{D(\bar{w}_i^{n-1},m_i^{n-1}) - D(s^n_i,c_i^n)}\, \abs*{\pdx{\zeta_i^n}}.
\end{align*}
Using Taylor series we write
\begin{align*}
\abs*{D(\bar{w}_i^{n-1},m_i^{n-1}) - D(s^n_i,c_i^n)} &\leq \abs{s_i^n - \bar{w}_i^{n-1}} \nm*{\p{D}{s}}_\infty + \abs{c_i^n - m_i^{n-1}} \nm*{\p{D}{c}}_\infty \\
&\leq M(\Delta t + h \Delta t + h^2 + \abs*{\bar{\zeta}_i^{n-1}} + \abs{\theta_i^{n-1}}),
\end{align*} 
where we use the estimate $ \abs{c_i^n - m_i^{n-1}} \leq M(\abs{\theta_i^{n-1}} + \Delta t) $. Also,
\begin{align*} 
\abs{s_i^n - \bar{w}_i^{n-1}} &\leq  \abs{s_i^n - s_i^{n-1}} +  \abs{s_i^{n-1} - \bar{w}_i^{n-1}} \leq \abs{s_i^n - s_i^{n-1}} + \abs*{\Ia s^{n-1}(\tilde{x_i}) - s_i^{n-1} - \bar{\zeta}_i^{n-1}}\\ 
&\leq M \left(\Delta t + \abs*{\tilde{x_i} - x_i} \nm{s^{n-1}}_{1,\infty} + \abs*{(1-\Ia) s^{n-1}(\tilde{x_i})} + \abs*{\bar{\zeta}_i^{n-1}}\right)\\
&\leq M \left( \Delta t  + \nm*{\p{f}{s}}_\infty \nm{z^{n-1}}_\infty \, \frac{\Delta t}{\phi_*} + C h^2 + \abs*{\bar{\zeta}_i^{n-1}} \right) \\ 
&\leq M \left(\Delta t + h \Delta t + h^2 + \abs*{\bar{\zeta}_i^{n-1}} \right) 
\end{align*} 
Above, we have used \Eq{pkestimate} and that $\nm{z^{n-1}}_\infty$ is bounded which will be proven below. Hence we have an estimate for E-3 as
\begin{align}
\langle \left((\bar{D}-D)\delta_x s^n \right)_i, \left(\delta_x \zeta^n \right)_i \rangle &\leq M_1 \nm{s^n}_{1,\infty}\sum_i h \abs{\bar{\zeta}_i^{n-1}} \, \abs*{\pdx{\zeta_i^n}} + M_2 \nm{s^n}_{1,\infty}\sum_i h \abs{\theta_i^{n-1}} \, \abs*{\pdx{\zeta_i^n}} \nonumber \\
&   \qquad   + M_3 \nm{s^n}_{1,\infty}\sum_i h (\Delta t + h \Delta t + h^2)\abs*{\pdx{\zeta_i^n}} \nonumber \\
&    \leq M ( \nm{\bar{\zeta}^{n-1}}^2 + \nm{\theta^{n-1}}^2 + \abs{\zeta^n}^2_{1,2} + \Delta t^2 + h^2 \Delta t^2 + h^4). \label{errorK1}
\end{align}

\noindent\underline{\bf{ E-4. Estimate of the term E-4 in \Eq{errorK}}}: Using the fundamental theorem of calculus,
\begin{align*}
\bar{\zeta}_i^{n-1} - \zeta_i^{n-1} &= \int_{x_i}^{\tilde{x_i}} \pdx{\zeta^{n-1}} \, \frac{\tilde{x_i}- x_i}{\abs{\tilde{x_i}- x_i}} \, d\sigma \\
\text{Hence } \quad \abs{\bar{\zeta}_i^{n-1} - \zeta_i^{n-1}}&\leq \int_{x_i}^{\tilde{x_i}} \abs*{ \pdx{\zeta^{n-1}}}\, d\sigma \, \leq \left(\int_{x_i}^{\tilde{x_i}} d\sigma\right)^{1/2}  \left(\int_{x_i}^{\tilde{x_i}}\abs*{ \pdx{\zeta^{n-1}}}^2 \,d\sigma \right)^{1/2}.
\end{align*}
Therefore, 
\begin{align}
 \langle \phi_i \frac{\bar{\zeta}_i^{n-1} - {\zeta}_i^{n-1}}{\Delta t} , \zeta_i^n \rangle &\leq \frac{\phi^*}{\Delta t} \left(\sum_i h \abs{\zeta_i^n}^2 \right)^{1/2} \left( \sum_i h \abs{\bar{\zeta}_i^{n-1}- \zeta^{n-1}_i}^2\right)^{1/2} \nonumber \\
& \leq M  \abs{\zeta^{n-1}}_{1,2} \, \abs{\zeta^{n}}_\infty \nm{z^{n-1}}_\infty \nonumber \\
& \leq M \abs{\zeta^{n-1}}_{1,2} \, \abs{\zeta^{n}}_{1,2} \, (1+h) \left(\log{1/h}\right)^{1/2} \quad [\mbox{ Using a result from \cite{B1966}}] \nonumber \\
& \leq M \left(\log{1/h}\right)^{1/2} (1+h) \left(\abs{\zeta^{n-1}}^2_{1,2} + \abs{\zeta^{n}}^2_{1,2} \right). \label{errorK2}
\end{align}
Above, we have again used that $\nm{z^{n-1}}_\infty$ is bounded. Finally, all the ingredients that enable us to represent the water saturation equation in terms of the error norms have been obtained. 
Using the estimates for E-1, E-2, E-3, E-4 in \Eq{errorK} we get,
\begin{align}\label{errorL}
M &(\nm{\zeta^n}^2 - \nm{\zeta^{n-1}}^2) + D_* \Delta t \abs{\zeta^n}^2_{1,2}  \nonumber \\
&\leq M \Delta t (h^2 + \Delta t^2 + \frac{h^4}{\Delta t^2}+ h^2\Delta t^2 + h^4) + M \Delta t \left( \nm{\zeta^n}^2 + \nm{\theta^n}^2 + \nm{\zeta^{n-1}}^2 + \nm{\theta^{n-1}}^2 \right) \\
& \quad+ M \Delta t ( 1+ (1+h)\left(\log{1/h}\right)^{1/2})\left(\abs{\zeta^{n-1}}^2_{1,2} + \abs{\zeta^{n}}^2_{1,2} \right). \nonumber
\end{align}
Summing over $ 1\leq n \leq L $ (with $L\Delta t =T$),
\begin{align*}
&M (\nm{\zeta^L}^2 - \nm{\zeta^0}^2) + D_* \Delta t \sum_{n=1}^L\abs{\zeta^n}^2_{1,2}  \\
&\leq M T (h^2 + \Delta t^2 + \frac{h^4}{\Delta t^2} + h^2\Delta t^2 + h^4) + M \Delta t \sum_{n = 1}^{L} \left( \nm{\zeta^n}^2 + \nm{\theta^n}^2 + \nm{\zeta^{n-1}}^2 + \nm{\theta^{n-1}}^2 \right) \\
&+ M \Delta t \left(1+ (1+h) \left(\log{1/h}\right)^{1/2}\right)\sum_{n = 1}^{L} \abs{\zeta^n}^2_{1,2}.
\end{align*}
Using discrete Gr\"{o}nwall's inequality and noting that $\zeta^0_i =0 $ and $\theta^0_i =0 $ this can be rewritten as
\begin{align}\label{sat-conv1}
M \nm{\zeta^L}^2 + (D_* \Delta t - \rho_1) \sum_{n=1}^L\abs{\zeta^n}^2_{1,2} 
\leq M \Delta t \sum_{n = 1}^{L} \left( \nm{\theta^n}^2\right) + M \, \max(h^2+ \Delta t ^2, h^4/\Delta t^2)
\end{align}
where $\rho_1 = M \Delta t  (1+ (1+h) \left(\log{1/h}\right)^{1/2}) \rightarrow 0$ faster than $D_* \Delta t$ as $(h,\Delta t) \rightarrow 0$.
This concludes the analysis of the water transport equation (\Eq{conv-sat1}).

Next we consider the polymer transport equation (\Eq{conv-conc1}). Replacing the advective terms with a derivative along the characteristic direction, \Eq{conv-conc1} becomes 
\begin{align}\label{errorL1}
\psi_c \p{c}{\tau} + Gc = H,
\end{align}
whose finite difference approximation is given by
\begin{align}\label{errorL2}
\phi_i \frac{m_i^n -\bar{m}_i^{n-1}}{\Delta t} + G_i^n m_i^n = H_i^n.
\end{align}
The analysis of the component transport equation will be carried out in a manner similar to the water transport equation, by estimating the temporal discretization error, the approximation error of the source term, the interpolation error of the characteristics, and relevant auxiliary functions and their derivatives. To that effect, recall from \cref{error-def}, $\theta^n_i = c_i^n - m_i^n$. Using an analogue of Lemma \ref{lem:char-error} for the characteristic derivative of the polymer transport equation in \Eq{errorL1} and subtracting \Eq{errorL2} from the result we obtain
\begin{align}\label{conc-error1}
\phi_i \frac{\theta_i^n - (\bar{c}_i^{n-1} - \bar{m}_i^{n-1})}{\Delta t} + G_i^n \theta_i^n &= H(s_i^n) - H(w_i^n) + O\left(\nm*{\pderiv[2]{c}{\tau}}_\infty \Delta \tau\right) \nonumber\\
 &\leq \abs{H(s_i^n) - H(w_i^n)}+ M \Delta t
\end{align}
As before the source terms are estimated as
\begin{align}\label{errorL3}
\abs{H(s_i^n) - H(w_i^n)} \leq \abs{s_i^n -w_i^n} \nm*{\p{H}{s}}_\infty \leq M \abs{\zeta^n_i}
\end{align}
In the following, with slight abuse of notation, we suppress the superscript ``c'' from $\tilde{x}_i^{c,n}$ and $\bar{x}_i^{c,n}$ to denote the points on the characteristic curves of the polymer transport equation. Continuing with the analysis, we rewrite the numerator of the first term on the left side of \Eq{conc-error1}  as
\begin{align}\label{errorM}
\theta_i^n - (\bar{c}_i^{n-1} - \bar{m}_i^{n-1}) = (\theta^n_i - \bar{\theta} ^{n-1}_i) - \underbrace{(c^{n-1} (\bar{x_i}) - c^{n-1}(\tilde{x_i}))}_{\text{F}} - \underbrace{(1- \Ia)c^{n-1}(\tilde{x_i})}_{\text{G}}
\end{align}
The term G is estimated by the Peano kernel theorem, as was done in \Eq{pkestimate}.

\noindent{\underline{\bf F. Estimate of the term F}}: This estimate is carried out in a series of steps.
\begin{align}
&\abs{c^{n-1} (\bar{x_i}) - c^{n-1}(\tilde{x_i})} \leq \nm{c^{n-1}}_{1,\infty} \abs{\tilde{x_i} - \bar{x_i}} \nonumber\\
& \leq M\frac{\Delta t}{\phi_*}\underbrace{ \abs*{\frac{f}{s}(\bar{w}^n_i,m_i^{n-1})z_i^{n-1} - \frac{f}{s}(s_i^n,c_i^n)v_i^n}}_{\text{F-1}} \nonumber\\
& \qquad \qquad + M\frac{\Delta t}{\phi_*}\underbrace{\abs*{\frac{D}{s}(\bar{w}^n_i,m_i^{n-1})\pdx{w_i^n} - \frac{D}{s}(s_i^n,c_i^n)\pdx{s_i^n}}}_{\text{F-2}}\label{conc-error2}
\end{align}

\noindent{\underline {\bf F-1. Estimate of the term F-1}}:
\begin{align}
 \abs*{\frac{f}{s}(\bar{w}^n_i,m_i^{n-1})z_i^{n-1} - \frac{f}{s}(s_i^n,c_i^n)v_i^n} &\leq \abs*{\frac{f}{s}\bar{w}^n_i,m_i^{n-1})}\underbrace{\abs*{z_i^{n-1}-v_i^n}}_{\text{F-1-1}}\nonumber \\
  & \qquad  + \underbrace{\abs*{\frac{f}{s}(\bar{w}^n_i,m_i^{n-1})-\frac{f}{s}(s_i^n,c_i^n)}}_{\text{F-1-2}}\abs{v_i^n} \label{errorM-1}
\end{align}
Out of the two pieces F-1-1 and F-1-2 required to obtain an estimate of F-1, we have already estimated the term F-1-1 in \Eq{errorF} which we recall here: $\abs*{z_i^{n-1}-v_i^n} \leq M(h + \Delta t + \abs{\zeta_i^{n-1}}+\abs{\theta_i^{n-1}})  $. We next estimate the term F-1-2 in \Eq{errorF}.
\smallskip

\noindent{\underline{\bf F-1-2. Estimate of the term F-1-2 in \Eq{errorM-1}}}: 
\begin{align*}
& \abs*{\frac{f}{s}(\bar{w}^n_i,m_i^{n-1})-\frac{f}{s}(s_i^n,c_i^n)} \\
&\qquad \leq \abs*{\bar{w}_i^n - s_i^n}\,\nm*{\p{}{s}\left(\frac{f}{s}\right)}_{\infty} + \abs*{\theta_i^{n-1}}\,\nm*{\p{}{c}\left(\frac{f}{s}\right)}_{\infty} +  \Delta t\, \nm*{\pdt{c_i^n}}_{\infty}\nm*{\p{}{c}\left(\frac{f}{s}\right)}_{\infty}\\
&\qquad \leq M \left(\underbrace{\abs*{\bar{w}_i^n - s_i^n} }_{\text{F-1-2-a}} + \abs{\theta_i^{n-1}} +  \Delta t \right)
\end{align*}
\medskip

\noindent{\underline{\bf F-1-2-a. Estimate of the term F-1-2-a}}: 
\begin{align*}
\bar{w}_i^n - s_i^n &= \Ia s^n(\tilde{x}_i) - s^n(x_i)-\bar{\zeta}_i^n
= (\tilde{x}_i - x_i) \pdx{s^{n*}} - (1-\Ia)s^n(\tilde{x}_i) - \bar{\zeta}_i^n
\end{align*}
Therefore 
\begin{align}
\abs{\bar{w}_i^n - s_i^n} & \leq \abs{\tilde{x}_i - x_i}\, \nm{s^n}_{1,\infty} + Ch^2 + \abs{\bar{\zeta}_i^n} \quad (\text{Using the Peano-kernel theorem})\nonumber \\
&\leq M \Delta t \left\{\nm*{\frac{f}{s}}_{\infty}\, \nm{z^{n-1}}_{\infty} + \nm*{\frac{D}{s}}_\infty \left(\abs*{\pdx{s_i^n}}_\infty +\abs*{\pdx{\zeta_i^n}} \right) \right\}+ Ch^2 + \abs{\bar{\zeta}_i^n}\nonumber \\
&\leq M\Delta t \left\{ h+ C + \abs*{\pdx{\zeta_i^n}}\right\} + M h^2 +  \abs{\bar{\zeta}_i^n} \label{errorN1-2}
\end{align}
The last step of the above estimate in \Eq{errorN1-2} requires a bound on $\nm{z^{n-1}}_\infty$ which was also used while estimating E-3 and E-4 in \cref{errorK1,errorK2} respectively. Before further analysis, we prove this statement here. Note that, even though we prove the result for $\nm{z^n}_\infty$, it is true for any other time $t^n$ with $n\in (0,T)$. 
\begin{align}
z_i^{n} &= - K \lb(w_i^n,m_i^n )\pdx{r_i^n} = K  \lb(w_i^n,m_i^n ) \left[ \pdx{\pi_i^n} - \pdx{p_i^n}\right] \nonumber \\
\nm{z^n}_\infty &\leq \nm{K}_\infty \nm{\lb}_\infty \left(1+ \nm*{\pdx{\pi_i^n}}_\infty\right) \leq M ( 1 + \beta h); \quad (\beta \text{ is a constant}) \label{errorN1-3}
\end{align}
Using \Eq{errorN1-2} we obtain an estimate for F-1-2 as
\begin{align}
\abs*{\frac{f}{s}(\bar{w}^n_i,m_i^{n-1})-\frac{f}{s}(s_i^n,c_i^n)} \leq M\left( h^2 + \Delta t + h \Delta t + \abs*{\theta_i^{n-1}} + \abs*{\bar{\zeta_i^n}} + \Delta t \abs*{\pdx{\zeta^n}}\right)
\end{align}
Using these estimates of F-1-1 and F-1-2 in \Eq{errorM-1} we obtain an estimate of F-1 as 
\begin{align}\label{errorN1}
 \abs*{\frac{f}{s}(\bar{w}_i^n,m_i^{n-1})z_i^{n-1} - \frac{f}{s}(s_i^n,c_i^n)v_i^n} \leq M \left( h + \Delta t + h^2 + h\Delta t + \abs{\zeta_i^{n-1}}+ \abs{\theta_i^{n-1}} + \abs{\bar{\zeta_i^n}} + \Delta t \abs*{\pdx{\zeta^n}} \right)
\end{align}
%\medskip

\noindent{\underline{\bf F-2. Estimate of the term F-2 of \Eq{conc-error2}}}:
\begin{align}
&\abs*{\frac{D}{s}(\bar{w}^n_i,m_i^{n-1})\pdx{w_i^n} - \frac{D}{s}(s_i^n,c_i^n)\pdx{s_i^n}} \nonumber\\
&  \qquad \leq \left(\underbrace{\abs{\bar{w}_i^n-s_i^n}}_{\text{F-1-2-a}}\, \nm*{\p{}{s}\left(\frac{D}{s}\right)}_{\infty} + \abs{m_i^{n-1}-c_i^n}\, \nm*{\p{}{c}\left(\frac{D}{s}\right)}_{\infty} \right) \abs*{\pdx{s_i^n}} 
 + \nm*{\frac{D}{s}(\bar{w}_i^n,m_i^{n-1})}_{\infty} \abs*{\pdx{\zeta_i^n}}\label{errorN1-1}
\end{align}

\noindent Using the estimate for F-1-2-a given in \Eq{errorN1-2} in \Eq{errorN1-1}, we obtain 
\begin{align}
&\abs*{\frac{D}{s}(\bar{w}^n_i,m_i^{n-1})\pdx{w_i^n} - \frac{D}{s}(s_i^n,c_i^n)\pdx{s_i^n}} \leq M \left(\abs{\bar{w}_i^n - s_i^n} + \abs{\theta_i^{n-1}} + \Delta t + \abs*{\pdx{\zeta_i^n}} \right)\nonumber \\
& \quad \qquad \qquad  \leq M \left( \Delta t \left(C + h +\abs*{\pdx{\zeta_i^n}} \right) + h^2 +  \abs{\bar{\zeta}_i^n} + \abs{\theta_i^{n-1}} + \Delta t + \abs*{\pdx{\zeta_i^n}}  \right)\nonumber \\
& \quad \qquad \qquad \leq M\left(\Delta t + h^2+ h \Delta t + \abs{\bar{\zeta}_i^n} + \abs{\theta_i^{n-1}} + (1+ \Delta t)\abs*{\pdx{\zeta_i^n}} \right). \label{errorN2}
\end{align}
Using \Eq{errorN1} and \Eq{errorN2} in \Eq{conc-error2} we obtain the following estimate for the term F in \Eq{errorM}.
\begin{align}
 \abs{c^{n-1} (\bar{x_i}) - c^{n-1}(\tilde{x_i})} 
&\leq M \Delta t \left(h + \Delta t + h^2 + h\Delta t + \abs{\zeta_i^{n-1}}+ \abs{\theta_i^{n-1}} + \abs{\bar{\zeta_i^n}} + \Delta t \abs*{\pdx{\zeta^n}}\right) \nonumber \\
& \qquad + M \Delta t \left(\Delta t + h^2+ h \Delta t + \abs{\bar{\zeta}_i^n} + \abs{\theta_i^{n-1}} + (1+ \Delta t)\abs*{\pdx{\zeta_i^n}}\right) \nonumber\\
& \leq M \Delta t \left(  h + \Delta t +h^2 + h \Delta t + \abs{\zeta^{n-1}_i} + \abs{\theta^{n-1}_i} + \abs{\bar{\zeta}^n_i} + (1+ \Delta t) \abs*{\pdx{\zeta^n_i}}\right). \label{conc-error3}
\end{align} 
We test \Eq{conc-error1} against $\theta_i^n$ and using \Eq{errorL3} and \Eq{conc-error3}, we get
\begin{align*}
\langle &\phi_* \frac{\theta^n_i - \bar{\theta}^{n-1}_i}{\Delta t}, \theta_i^n \rangle + \langle \hat{M}\theta_i^n , \theta_i^n \rangle \\
&\leq \langle M \left( h + h^2 + \Delta t + h \Delta t + \abs{\zeta^{n-1}_i} + \abs{\theta^{n-1}_i} + \abs{\bar{\zeta}^n_i} + (1+ \Delta t) \abs*{\pdx{\zeta^n_i}}\right) , \theta_i^n \rangle \\ 
& \qquad + \langle M \left( \frac{h^2}{\Delta t} + \Delta t + \abs*{\zeta_i^n} \right), \theta_i^n \rangle.
\end{align*}
After some simplification, we get
\begin{align}\label{conc-error4}
 \phi_* (\nm{\theta^n}^2 &- \nm{\theta^{n-1}}^2)   \leq    \phi^* \nm*{\theta^n}^2 + (\phi^* - \phi_*) \nm*{\theta^{n-1}}^2 + \bar{M}\Delta t \left( h^4 + \Delta t^2 + h^2 +h^2 \Delta t^2 + \frac{h^4}{\Delta t^2}\right) \nonumber \\ 
& + M \Delta t \left( \nm{\theta^{n-1}}^2 + \nm{\zeta^{n-1}}^2 + \nm{\theta^{n}}^2 + \nm{\zeta^{n}}^2 + (1+ \Delta t ) (\abs{\zeta^n}^2_{1,2} + \nm{\theta^n}^2) \right).  
\end{align}
Adding \Eq{errorL} and \Eq{conc-error4}, summing over $1 \leq n \leq L$ and after some further simplification, we get
\begin{align}\label{conc-conv1}
M ( &\nm*{\zeta^L}^2 + \nm*{\theta^L}^2 ) + \bar{M} \Delta t  \sum_{n=1}^L \abs*{\zeta^n}^2_{1,2} \nonumber \\
&\leq M (1+ \Delta t + \Delta t^2) \sum_{n=0}^L \nm*{\theta^n}^2 + M \Delta t \sum_{n=0}^L \nm*{\zeta^n}^2 + MT \left(h^2 + \Delta t^2 + h^2 \Delta t^2 + h^4 + \frac{h^4}{\Delta t^2}\right) \nonumber \\
&+ M \Delta t \left( (1+ \Delta t) + (1+h) (\log{1/h})^{1/2} \right) \sum_{n=0}^L \abs{\zeta^n}^2_{1,2} 
\end{align}
where $T = L \Delta t$. Let $\rho = \left( (1+ \Delta t) + (1+h) (\log{1/h})^{1/2} \right)$ such that $\rho \rightarrow 0$. Then using the discrete Gr\"{o}nwall's inequality in \Eq{conc-conv1}, we get
\begin{align}\label{final-convergence}
&\nm*{\zeta^L}^2 + \nm*{\theta^L}^2 + \Delta t \sum_{n=0}^L \abs{\zeta^n}^2_{1,2} \leq M (h^2 + \Delta t ^2), \\
\text{ where } M &= M\left( \nm*{s}_{L^\infty(W^{3,\infty})}, \, \nm*{s}_{W^{1,\infty}(L^{\infty})}, \, \nm*{\frac{\partial^2 s}{\partial \tau^2}}_{L^\infty(L^{\infty})}, \, \nm*{c}_{L^\infty(W^{3,\infty})}, \, \nm*{c}_{W^{1,\infty}(L^{\infty})},  \right. \nonumber \\
& \qquad \qquad \left.  \nm*{\frac{\partial^2 c}{\partial \tau^2}}_{L^\infty(L^{\infty})}, \, \nm*{p}_{L^\infty(W^{1,\infty}) }, \, \nm*{p}_{W^{1,\infty}(W^{1,\infty})  }    \right) \nonumber.
\end{align} 
We anticipate an $L^2$ error of the order $O(h)$ and consequently, we assume that $\Delta t = O(h)$ as $h\rightarrow0$ which implies $\max(h^2 + \Delta t^2, h^4/\Delta t^2) = h^2 + \Delta t^2$.  In particular, it follows that 
\begin{align}\label{final-estimate}
\nm*{\zeta^L}_{L^2} \leq M h, \quad \nm*{\theta^L}_{L^2} \leq M h.
\end{align}
Note that $\Delta t = O(h)$ hypothesis is very reasonable since in the case of a one-dimensional parabolic equation the standard discretization can only be expected to yield an $O(h+\Delta t)$ estimate. Also, with a less stringent restriction like $\Delta t = O(h^\gamma)$ for some $\gamma<2$, we will have an $L^2$ error estimate of the order $O(h^{1-\gamma/2})$. The final error estimate is summarized in the following theorem. 
\begin{theorem}
Let $s$ and $c$ be the solutions of \Eq{conv-sat1} and \Eq{conv-conc1} respectively. Let $w$ and $m$ be the solutions of \Eq{conv-num2} and \Eq{conv-num11} respectively where $\bar{w}_i^{n-1}$ is given by \Eq{time-deriv1} and $\bar{m}_i^{n-1}$ is given by \Eq{time-deriv2}. Then, the errors $\zeta = s-w$ and $\theta = c-m$ satisfy the inequalities given in \Eq{final-estimate} and the convergence result given in \Eq{final-convergence}.  
\end{theorem}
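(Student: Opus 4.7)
The plan is to run the argument as a coupled energy estimate for the two error variables $\zeta^n_i = s^n_i - w^n_i$ and $\theta^n_i = c^n_i - m^n_i$, piggy-backing on Lemmas~\ref{lem:char-error} and~\ref{lem:secdev-error}. First I would form the two ``exact minus numerical'' equations by subtracting \eqref{conv-num2} from \eqref{errorB} and \eqref{errorL2} from the analogue of \eqref{errorL1} (using an analogue of Lemma~\ref{lem:char-error} along the concentration characteristic), arriving at \eqref{errorC} and \eqref{conc-error1}. The right-hand sides then contain (i) truncation terms of the order $\Delta\tau\,\|\partial^2 s^*/\partial\tau^2\|+h\|s^n\|_{3,\infty}$ from the two lemmas, (ii) Lipschitz-type differences of the source/coefficient functions $F$, $H$, $D$, $f$, $f/s$, $D/s$ evaluated at $(s,c)$ versus $(w,m)$, and (iii) a ``foot-of-characteristic'' defect $\bar{s}^{n-1}_i-\bar{w}^{n-1}_i$ that I would split, as in \eqref{errorC1} and \eqref{errorM}, into $\zeta^n_i-\bar\zeta^{n-1}_i$, a pure characteristic displacement $s^{n-1}(\bar{x}_i)-s^{n-1}(\tilde{x}_i)$ (term A/F) and an interpolation remainder $(1-\mathcal I)s^{n-1}(\tilde{x}_i)$ (term B/G).

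Next I would bound term B/G by the Peano kernel theorem as in \eqref{pkestimate}, and control A/F by writing it as $\|s^{n-1}\|_{1,\infty}\,|\bar{x}_i-\tilde{x}_i|$ and expanding the differences of $\partial f/\partial s$ and $(f/s,D/s)$ evaluated at numerical versus exact arguments via Taylor expansion. This produces the crucial velocity-error step (term A-1, estimate \eqref{errorF}), which requires the $L^\infty$ pressure-gradient error bound $\|\partial_x\pi^n\|_\infty=O(h)$ coming from the non-traditional discontinuous finite element method \cite{HWW2010}, together with the a~posteriori bound $\|z^{n-1}\|_\infty\le M(1+\beta h)$ derived after \eqref{errorN1-2}. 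Throughout, differences of the nonlinear coefficients are reduced to $|\zeta|+|\theta|+\Delta t+h+h^2$ plus a term involving $|\partial_x\zeta|$, which is the mechanism propagating the polymer-saturation coupling into the estimate.

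Once the right-hand sides are in the form $M(h+\Delta t+|\zeta|+|\theta|+|\bar\zeta|+(1+\Delta t)|\partial_x\zeta|)$, I would test the saturation error equation against $\zeta^n_i$ and the concentration error equation against $\theta^n_i$ in the discrete $L^2$ inner product. The parabolic term yields $D_*|\zeta^n|^2_{1,2}$ after summation by parts; the pure time-difference term yields $\tfrac{M}{\Delta t}(\|\zeta^n\|^2-\|\zeta^{n-1}\|^2)$; cross terms $\langle \theta^n,\zeta^n\rangle$ are split by Cauchy-Schwarz and AM-GM; and the displacement term E-4 is handled through the fundamental theorem of calculus combined with the logarithmic inverse estimate from \cite{B1966} to keep the $|\partial_x\zeta|^2_{1,2}$ contribution absorbable by $D_*\Delta t\,|\zeta^n|^2_{1,2}$. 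This is exactly what yields \eqref{errorL} and its polymer analogue \eqref{conc-error4}.

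Finally I would add \eqref{errorL} and \eqref{conc-error4}, sum for $1\le n\le L$ with $L\Delta t=T$, use $\zeta^0=\theta^0=0$, and move the $|\zeta^n|^2_{1,2}$ piece with coefficient $\rho=M\Delta t(1+(1+h)(\log 1/h)^{1/2})\to 0$ to the left-hand side so that the diffusion term dominates. A discrete Gronwall inequality then gives \eqref{conc-conv1} and, after invoking $\Delta t=O(h)$ so that $\max(h^2+\Delta t^2, h^4/\Delta t^2)=h^2+\Delta t^2$, produces the main convergence estimate \eqref{final-convergence} and the $O(h)$ bound \eqref{final-estimate}. The single real obstacle is the coupling: the coefficient of $\partial_x\zeta^n$ that enters through the $D/s$ and $f/s$ differences in term F must be absorbed by the parabolic coercivity in the saturation equation rather than the concentration equation, which is why the two estimates have to be produced together and added, not chained; the logarithmic inverse inequality in term E-4 is the delicate ingredient that makes this absorption go through.
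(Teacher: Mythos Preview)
Your proposal is correct and follows essentially the same route as the paper's own proof: the same splitting \eqref{errorC1}/\eqref{errorM} of the foot-of-characteristic defect into A/B and F/G pieces, the same Peano-kernel, Taylor, and velocity-error estimates \eqref{errorF}--\eqref{errorI}, the same testing against $\zeta^n_i$ and $\theta^n_i$ with summation by parts and the logarithmic inverse inequality from \cite{B1966} for E-4, and the same final step of adding \eqref{errorL} and \eqref{conc-error4}, summing in $n$, and applying the discrete Gronwall inequality under $\Delta t=O(h)$. Your closing remark that the $|\partial_x\zeta^n|$ contribution from the $f/s$, $D/s$ differences in term F must be absorbed by the parabolic coercivity of the saturation equation---forcing the two estimates to be combined rather than chained---is exactly the structural point that distinguishes this analysis from the component-free case in \cite{DR1982}.
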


\subsection{Extension to two dimensions in space}
Here we discuss how to extend the analysis of \Eq{conv-num11} and \Eq{conv-num2} to two spatial dimensions. The error estimates for the discretization of the characteristic derivatives and the capillary dissipation terms, obtained in Lemma~\ref{lem:char-error} and Lemma~\ref{lem:secdev-error} respectively, can be easily extended to two spatial dimensions without changing the order of the estimates. The various inequalities and tools used at various stages of the analysis like the Cauchy-Schwarz inequality, discrete Gr\"{o}nwall's inequality and the Taylor series have multidimensional analogues. The Peano kernel theorem can also be used in a similar manner for estimating the error introduced due to the bilinear interpolation required in the two-dimensional analysis. The $\nm*{\p{\pi}{x}}_\infty$ estimate obtained from the finite element solution of the elliptic pressure equation is also available in two or higher spatial dimensions \cite{HWW2010}. The $L^\infty$ estimate of a mesh function \cite{B1966}, that has been used to estimate the term E4, is also applicable for a two-dimensional domain. In the analysis of the one dimensional system, the spatial grid has been taken to be uniform with a fixed spatial grid size $h$. In the two dimensional system, the grid can be taken to be uniform in each spatial dimension with constant $h_x = \Delta x$ and $h_y = \Delta y$. The quasilinear treatment of the nonlinearity in the functions $f, D, \lb$ will allow us to obtain analogous estimates of the two-dimensional inner products involving these terms without affecting the convergence results. Hence, a comprehensive analytical study can be expected to yield an $O(h_x + h_y + \Delta t)$ error estimate for the two dimensional problem. However, a complete analysis of the two dimensional problem will also require careful re-estimation of some of the intermediate results, especially those involving the use of Sobolev inequalities on the higher order norms and semi-norms of the true solution and the auxiliary functions.    

\subsection{Extension to two component systems}
Here we discuss the possibility of extending this analysis to the case of two-component two-phase flows like surfactant-polymer flooding. Such a system has been studied recently in Daripa \& Dutta \cite{DD2017} which we present below.
\begin{subequations}
	\begin{align}
	& -\bgrad \cdot \left(\bK(\bx)\lb(s,c,\Gamma)\bgrad p\right) = q_a+q_o,  \label{eq:surfflood2-press} \\
	& \phi \p{s}{t} + \p{f_a}{s} \textbf{v}\cdot \bgrad s + \div \left(D \p{p_c}{s} \bgrad s \right) = g_s - \p{f_a}{c} \textbf{v}\cdot \bgrad c - \p{f_a}{\Gamma} \textbf{v}\cdot \bgrad \Gamma  - \div \left(D \p{p_c}{\Gamma} \bgrad \Gamma\right), \label{eq:surfflood2-1} \\ 
	& \phi\p{c}{t} + \left(\frac{f_a}{s} \textbf{v} + \frac{D}{s} \p{p_c}{s} \bgrad s  + \frac{D}{s} \p{p_c}{\Gamma} \bgrad \Gamma \right)\cdot\bgrad c = g_c, \label{eq:surfflood2-2} \\
	& \phi\p{\Gamma}{t} + \left(\frac{f_a}{s} \textbf{v} + \frac{D}{s} \p{p_c}{s} \bgrad s+ \frac{D}{s} \p{p_c}{\Gamma} \bgrad \Gamma\right) \cdot\bgrad \Gamma  = g_\Gamma, \label{eq:surfflood2-3} 
	\end{align}
\end{subequations}
where $D(s,c,\Gamma) =  \bK(\bx)\lb_o(s,\Gamma)f_a(s,c,\Gamma)$, $\Gamma$ is the surfactant concentration and the source terms $q_a$, $q_o$, $g_s$, $g_c$, $g_\Gamma$ are defined similar to the one-component flow model. As seen from the above model, the transport equations for polymer and surfactant have a similar structure. Hence the surfactant transport equation can be analyzed in a similar fashion to obtain error estimates for this two-component two-phase flow system. However, such an exercise also poses certain challenges. The functions $p_c$ and $D,f,\lb, \lb_a, \lb_o$ are not always dependent on all three components $s,c,\Gamma$. For instance, the capillary pressure $p_c = p_c(s,\Gamma)$ is only affected by changes in water saturation $s$ and surfactant concentration $\Gamma$, whereas the fractional flow functions $f_a = f_a(s,c,\Gamma)$ and $f_o = f_o(s,c,\Gamma)$, depend on all three. Similarly, $\lb_a = \lb_a(s,c,\Gamma)$ but $\lb_o = \lb_o(s,\Gamma)$. This means that the estimates are not always symmetric with respect to the two transport variables $c$ and $\Gamma$. Hence, an analogous error estimate for the two-component system is difficult to obtain as a direct extension of the one-component system and it needs further non-trivial analysis. However, due to the similarity in the structure of the transport equations for $c$ and $\Gamma$ and because the numerical method for the two-component system is a direct extension of the one-component system, we anticipate an equivalent $L^2$ estimate of the order $O(h+\Delta t)$ even for the error in surfactant concentration.    

\section{Numerical results}\label{sec:results}
In \cite{DD2017}, an exact solution for the two-dimensional, immiscible, two-phase flow problem has been constructed and used for numerical verification of the convergence and the order of accuracy of the numerical method. The $L^2$ and $L^\infty$ error norms and the respective orders of accuracy with spatial grid refinement have been presented there for the water saturation $s$, the pressure $p$ and the velocity $\bv$. The $L^2$ and $L^\infty$ error norms for $s$ with time step refinement at different fixed spatial grid sizes have been also presented along with the corresponding convergence rates. The $L^2$ error in saturation has been shown to be of the order $O(h)$. This is consistent with the estimate obtained from our one-dimensional analysis presented in this paper and, as discussed above, is expected to be true in two-dimensional case. In \cite{DD2017}, the $O(h)$ error in the $L^\infty$ norm of the gradient of pressure has also been observed in the numerical results obtained with an exact solution. Additionally, the numerical water saturation profiles at various spatial resolutions have been found to compare favorably with the saturation profiles for the exact solution, thus providing support for the convergence of the numerical method.

Several numerical experiments have been carried out using this numerical method to simulate practical two-phase flow problems, both with and without components, that arise in the context of chemical enhanced oil recovery. Mainly two different types of computational domains have been employed for these simulations - a quarter of a five-spot geometry (or radial flow) that mimics oil reservoir conditions near the location of the physical sources, and a rectilinear geometry that mimics the flow conditions far from the location of the sources. A variety of different types of heterogeneous permeability fields have also been used. These include rectangular inclusions, channelized domains, a multiscale permeability field generated using a stationary, isotropic, fractal Gaussian field and sections of the SPE10 permeability field. These simulation results have been used to qualitatively validate the numerical method by comparing with results from existing literature. Also, several comparison studies have been performed between different combinations of single or multi-component, two phase flows which demonstrate the capability of the method to capture the intricate details of the flow characteristics and produce numerical results that are consistent with expectations based on physics. The reader is directed to Daripa \& Dutta \cite{DD2017} for further details.    

In this section, we present numerical results obtained from solving the two-phase single component system of equations (polymer flooding), given by \cref{paper2-glbfinal,paper2-sateq,paper2-conceq}, subject to realistic initial and boundary data. These are intended to support the error calculations with respect to an exact solution that were presented in \cite{DD2017} and provide further quantitative evidence about the accuracy and the order of convergence of the method, even in the case of practical numerical simulations. In \cref{sec:result2}, the input data is given in \cref{table:paper2-input_par}.
The numerical errors are measured in the discrete norms.
\begin{subequations}\label{error-norms}
	\begin{align}
	e_{s,max} &= \displaystyle\max_{ij}{\abs{s(\bx_{ij})-w_{ij}}} \equiv \nm{s - w}_{L^\infty}, \\
	e_{s,2} &= \sqrt{\displaystyle\sum_{ij}\abs{s(\bx_{ij})-w_{ij}}^2 \Delta x \Delta y} \equiv \nm{s-w}_{L^2}. 
	\end{align}
\end{subequations}
Here, $w_{ij}$ is the numerical solution $w$ evaluated at the grid point $(x_i,y_j) = \bx_{ij}$ whereas $s(\bx_{ij})$ is the finest grid numerical solution in \cref{sec:result2}. The errors for the pressure and the velocity are computed in a similar fashion. The order of accuracy is computed using the formula $\log_2{(e_{\alpha}(h)/e_{\alpha}(h/2))}$ ($\alpha = 2,\infty$).  

\subsection{Two-dimensional polymer flood problem}\label{sec:result2}
We perform simulation of polymer flooding on a quarter five-spot homogeneous geometry $\Omega = [0,1]^2$ with absolute permeability, $\bK = 1$ and input parameters listed in \Cref{table:paper2-input_par}. The transport source terms in \cref{paper2-sateq,paper2-conceq} for a quarter-five spot flow geometry are taken as
\begin{align*}
g_s =\begin{cases}
(1-f_a)Q\\
0 
\end{cases} 
g_c = \begin{cases}
(c^i - c) Q/s \\
0 
\end{cases}
\text{ at } \bx = \begin{cases}
\bx^i \equiv (0,0) \quad \text{ (source)}\\
\Omega \setminus \{\bx^i\} \quad \text{ (elsewhere)}
\end{cases}.
\end{align*} 
The source terms for the pressure \cref{paper2-glbfinal} are taken as
\begin{align*}
&q_a =\begin{cases}
Q \\
-(\lb_a/\lb) \, Q \, ; \\
0 
\end{cases} 
q_o = \begin{cases}
0   \\
-(\lb_o/\lb) \, Q  \\
0 
\end{cases} 
\text{ at} \,
\begin{cases}
\bx^i = (0,0) \qquad \qquad \qquad \text{ (Source)} \\
\bx^p = (1,1) \qquad \qquad \qquad \text{ (Sink)} \\
\bx \in \Omega\setminus\{(0,0) \cup (1,1)\} \, \text{ (Elsewhere)}
\end{cases}.
\end{align*} 
\renewcommand{\arraystretch}{1}
\begin{table}[h!]
	\caption{Simulation input data}
	\centering
	\begin{tabular}{||l  c c||} 
		\hline
		Model parameter       & Symbol        & Value  \\ [0.5ex] 
		\hline\hline
		Spatial grid size     & $h \times k$  & variable \\
		Porosity              & $\phi$        & 1  \\
		Initial resident water saturation & $s^{\sigma0}_0$ & $0.21$ \\	
		Oil viscosity         & $\mu_o$       &  12.6 \\
		Pure water viscosity  & $\mu_w$       & 6.3  \\
		Residual aqueous phase saturation & $s_{ra}$  & 0.1  \\
		Residual oleic phase saturation   & $s_{ro}$  & 0.2  \\
		Parameters of capillary pressure relation [\cref{paper2-eq:cap}] & $\alpha_0$, $m$ & $0.125$, $2/3$ \\
		Concentration of polymer in injected fluid & $c_0$ & 0.1 \\
		Injection rate        & $Q$   & 200 \\ 
		Time step size        & $\Delta t$    & $1/50$ \\ [.5ex] 
		\hline
	\end{tabular}
	\label{table:paper2-input_par}
\end{table}
We compute the $L^\infty$ and $L^2$ error norms of the numerical solutions for the saturation on a sequence of uniformly refined meshes $h=1/8, 1/16, 1/32, 1/64$ using \cref{error-norms}, but with $s(\bx_{ij})$ representing the solution on the finest grid size $h=1/128$. A similar procedure is applied to estimate the error norms and the order of accuracy for the pressure and the velocity. The numerical errors and the order of accuracy are presented in \Cref{table:paper2-error1}. 
In \Cref{table:paper2-error2} we present the numerical errors and convergence rates with respect to time step size refinement $\Delta t = 1/20, \ldots , 1/160$ by keeping the spatial grid size fixed at three different levels $h= 1/16, 1/32$ and $1/64$ for the quarter five-spot flooding problem. The error calculations for both the tables have been performed at the time of water breakthrough which is given by the time at which the water saturation at $\bx^p$ reaches a chosen threshold value. We observe (see \Cref{table:paper2-error1}) the following approximate orders of accuracy in space. 
\begin{align*}
\nm{s-w}_{L^2} = O(h), \quad \nm{p-r}_{L^2} = O(h^2) \quad \& \quad \nm{\bv-\bm{z}}_{L^2} = O(h^2)
\end{align*} 
The $O(h)$ error in the $L^2$ norm for saturation $s$ directly matches with the estimate obtained from our one-dimensional analysis in \cref{final-estimate}. The $O(h)$ error in the gradient of the pressure $p$ (as seen in \cref{errorD}) is also observed in the $L^\infty$ norm $\nm{\bv - \bm{z}}_{L^\infty}$ for the velocity in \Cref{table:paper2-error1}. Moreover, the orders of accuracy in \Cref{table:paper2-error1} for the $L^2$ and $L^\infty$ errors of all the three variables $s$, $p$ and $\bv$ are consistent with the orders of accuracy obtained using an exact solution (see Table 1 in \cite{DD2017}).
  
The order of accuracy in the $L^\infty$ norm of the error in saturation, as presented in the upper part of the last column of \Cref{table:paper2-error1} can be seen to reduce significantly with reduction in spatial grid size. 
This is the because the saturation and its $L^\infty$ norm are both highly sensitive to minor changes in the flow and domain parameters, especially the ones whose $L^\infty$ bounds enter the generic coefficient $M$ used in \Eq{final-convergence} and in various other intermediate estimates obtained in \Cref{sec:analysis}. Hence the $L^\infty$ error norms of saturation in \Cref{table:paper2-error1} are at least an order higher than the corresponding norms for pressure and velocity, both of which are less sensitive to minor changes in the parameter space. To overcome this, a finer spatial grid size and time step size (data not shown here) need to be adopted for the numerical solution of the transport equations.

\Cref{table:paper2-error2} shows the $L^2$ error in saturation and the rate of convergence with respect to time. The results confirm that approximately a first order convergence rate in time can be obtained using this method. This compares favorably with results obtained using an exact solution (see Table 2 in \cite{DD2017}) and also with the convergence rate expected from the first order time discretization scheme used in the numerical formulation. We believe that with higher order time-stepping methods, the method will be able to preserve the accuracy and the expected second or third order convergence rates.

\renewcommand{\arraystretch}{1.3}
\begin{table}[ht!]
	\caption{Error and order for saturation, pressure and velocity at water breakthrough of a quarter five-spot polymer flooding simulation}
	\begin{center}
		\begin{tabular}{l c c c c c}
			\hline
			& $h$ & $\nm{s-w}_{L^2}$ & ~Order~ & $\nm{s-w}_{L^\infty}$ & ~Order~ \\
			\hline
			\hline
			\multirow{4}{*}{\rotatebox[origin=c]{90}{Saturation}} ~
			& $1/8$  & $4.39$e$-$3 & $-$     & $3.64$e$-$2 & $-$ \\ 
			& $1/16$ & $1.85$e$-$3 & $1.247$ & $1.76$e$-$2 & $1.048$ \\
			& $1/32$ & $7.84$e$-$4 & $1.239$ & $1.08$e$-$2 & $0.704$ \\
			& $1/64$ & $3.22$e$-$4 & $1.284$ & $6.86$e$-$3 & $0.656$ \\
			\hline
			& $h$ & $\nm{p-r}_{L^2}$ & ~Order~ & $\nm{p-r}_{L^\infty}$ & ~Order~ \\
			\hline
			\hline
			\multirow{4}{*}{\rotatebox[origin=c]{90}{Pressure}} ~ 
			& $1/8$  & $4.12$e$-$3 & $-$     & $1.61$e$-$3 & $-$ \\
			& $1/16$ & $9.30$e$-$4 & $2.147$ & $4.75$e$-$4 & $1.761$ \\
			& $1/32$ & $2.10$e$-$4 & $2.147$ & $1.35$e$-$4 & $1.815$ \\
			& $1/64$ & $3.85$e$-$5 & $2.448$ & $3.10$e$-$5 & $2.123$ \\
			\hline
			& $h$ & $\nm{\bv-{\bf z}}_{L^2}$ & ~Order~ & $\nm{\bv-{\bf z}}_{L^\infty}$ & ~Order~ \\
			\hline
			\hline
			\multirow{4}{*}{\rotatebox[origin=c]{90}{Velocity}} ~ 
			& $1/8$  & $1.18$e$-$3 & $-$     & $6.94$e$-$3 & $-$ \\
			& $1/16$ & $2.94$e$-$4 & $2.005$ & $2.98$e$-$3 & $1.220$ \\
			& $1/32$ & $7.46$e$-$5 & $1.979$ & $1.39$e$-$3 & $1.110$ \\
			& $1/64$ & $1.96$e$-$5 & $1.928$ & $6.89$e$-$4 & $1.002$ \\
			\hline
			\hline
		\end{tabular}
		\label{table:paper2-error1}
	\end{center}
\end{table}

\renewcommand{\arraystretch}{1.4}
\begin{table}[ht!]
	\caption{Error and rates for saturation with time step refinement at water breakthrough of a quarter five-spot polymer flooding simulation.}
	\begin{center}
		\begin{tabular}{c | c c | c c | c c}
			\hline
			\multirow{2}{*}{$\Delta t$} & \multicolumn{2}{c|}{$h = 1/16$} & \multicolumn{2}{c}{$h = 1/32$} & \multicolumn{2}{c}{$h = 1/64$} \\
						 & $\nm{s-w}_{L^2}$ & ~Rate~ & $\nm{s-w}_{L^2}$ & ~Rate~  & $\nm{s-w}_{L^2}$ & ~Rate~ \\					   
			\hline
			\hline
				$1/20$   & $9.34$e$-$3     & $-$     & $8.93$e$-$3      & $-$     & $6.50$e$-$3      & $-$   \\
				$1/40$   & $4.36$e$-$3     & $1.100$ & $4.69$e$-$3      & $0.923$ & $3.48$e$-$3      & $0.901$ \\
				$1/80$   & $2.27$e$-$3     & $0.941$ & $2.51$e$-$3      & $0.899$ & $1.94$e$-$3      & $0.846$ \\
				$1/160$  & $1.25$e$-$3     & $0.867$ & $1.48$e$-$3      & $0.762$ & $1.17$e$-$3      & $0.722$ \\
			\hline
			\hline
		\end{tabular}
		\label{table:paper2-error2}
	\end{center}
\end{table}

\section{Conclusions}\label{sec:conclusions}
In Daripa \& Dutta~\cite{DD2017}, a hybrid numerical method was proposed for solving a two-phase two-component flow problem in porous media and was applied to successfully solve some relevant two-dimensional problems. The hybrid method uses a non-traditional discontinuous finite element method for solving the elliptic equation and a time-implicit finite difference method in combination with the modified method of characteristics for solving the transport equations. Numerical results presented there are in excellent agreement with the physics of flow as well as with exact solutions when available, and are also demonstrated to converge under mesh refinement. In this paper, we perform numerical analysis of the method to establish convergence by considering a reduced system, namely two-phase, one-component porous media flow in one-dimension. The novelty in the paper is the consideration of the single component system of transport equations which significantly complicates the analysis previously performed by others \cite{DR1982,D1983} without any component. Basic ideas of the proof can be extended to two-dimensions and to two-component systems as discussed in this article but needless to say, a complete study will be even more technically involved and beyond the scope of this work.
	
In the analysis presented here, the convergence behavior of the MMOC-FD part of this hybrid numerical method has been studied. An optimal order $O(h)$ error has been assumed for the pressure gradient obtained by the finite element part, which had been numerically validated in \cite{HWW2010}. Using this result, $L^2$ error estimates of the wetting phase saturation $s$ and the component concentration $c$ have been computed. Numerical experiments have been performed to simulate two-phase one-component flow in a quarter five-spot geometry. The $L^2$ error norm of the saturation $s$ and the $L^\infty$ error norm of the velocity obtained numerically have been used to verify the theoretical error estimates. These $L^2$ and $L^\infty$ error norms have been also used to demonstrate the numerical convergence of the method as well as the order of accuracy with spatial grid refinement and the convergence rates with respect to the time step refinement.

\section*{Acknowledgments}
The research reported in this paper has been supported in part by the U.S. National Science Foundation grant DMS-1522782, and by an appointment of Sourav Dutta to the Postgraduate Research Participation Program at the U.S.
Army Engineer Research and Development Center, Coastal and Hydraulics Laboratory (ERDC-CHL) administered by the Oak Ridge Institute for Science and Education through an interagency agreement between the U.S. Department of Energy and ERDC. Some of the numerical simulations have been performed using high-performance research computing resources provided by Texas A\&M University (\url{http://hprc.tamu.edu}). Permission was granted by the Chief of Engineers to publish this information. 

\normalem
\bibliographystyle{unsrt}
\bibliography{ref-abb}
\end{document}